\newtheorem{thm}{Theorem}
\newtheorem{lem}[thm]{Lemma}
\newtheorem{cor}[thm]{Corollary}
\newenvironment{proof}{\paragraph{Proof:}}{\hfill$\square$}
\title{Burning Circulant Graphs}
\author{Shannon Fitzpatrick \\
Leif Wilm\\
University of Prince Edward Island}
\begin{document}
\maketitle

\begin{abstract}
In  \cite{H2BG},  Bonato, Janssen and Roshanbin introduced the graph theoretic process of burning a graph.   This process begins with burning a single vertex of the graph, and at each subsequent time step two things occur: (1) the fire propagates and burns all neighbours of a previously burned vertex, and (2) a new vertex is selected to be burned.  The objective is to burn the entire graph in as few time steps as possible.

In this paper, we examine the burning numbers of circulant graphs.   The burning numbers are determined exactly  for 3-regular circulant graphs, and particular families of 4-regular circulant graphs.  We also provide upper and lower bounds on burning number for additional families of circulant graphs,  and provide some asymptotic results.

\end{abstract}

\section{Introduction}

With more and more people using social media, it has  increasingly become a source of news and information for many.  This is recognized by marketing firms who employ brand ``influencers" to promote their products on Facebook,  Instagram, Twitter, etc.  It is expected that the followers of these influencers will share, forward or re-tweet posts, and the original endorsement will propagate through their social network.   Given a number of influencers, all promoting the same product, but posting at different times, we ask ``How long would it take that post to propagate through a social network?"

In  \cite{H2BG},  Bonato, Janssen and Roshanbin introduced a simplified, deterministic version of this problem called {\it graph burning}.  Given a finite connected graph, the process of burning a graph begins with all vertices being unburned. At time step 1, a single vertex is chosen to be burned.  In each subsequent time step, two things occur: (1) the fire spreads to all neighbours of a previously burned vertex, and those vertices become burned, and (2) another vertex is selected to be burned.    Note that once a vertex is burned, it cannot be unburned.  The process is complete when all vertices of the graph have been burned.  The objective is to complete the process in the minimum number of time steps.   

Given a graph $G$,  suppose vertex $x_i$ is selected to be burned in time step $i$ of the burning process.  If all vertices of the graph are burned after  $k$ times steps, the sequence $(x_1, x_2, \ldots , x_k)$ is referred to as a {\it burning sequence}.   We note that once we choose to burn $x_i$ at time step $i$,  all vertices within distance $k-i$ of $x_i$ are burned by the end of time step $k$.  As a result,  $(x_1, x_2, \ldots , x_k)$ is a burning sequence of $G$ only if $N_0 [x_k]\cup N_1 [x_{k-1}]\cup \cdots \cup N_{k-1} [x_1]=V(G)$, where  $N_\ell [x] $ is the $\ell^{th}$-closed neighbourhood vertex $x$,   defined by $N_\ell [x] = \{y \in V(G): d(x,y) \le \ell\}$.

The minimum number of time steps required to burn all the vertices of a graph $G$ is called the {\it burning number of $G$} and denoted $b(G)$.    If a burning sequence of $G$ is of length $b(G)$, then we say it is an {\it optimal} burning sequence.   For example, $b(K_n) = 2$, when $n\ge 2$.  Furthermore, for any choice of distinct vertices $x_1$ and $x_2$ in $K_n$, $(x_1, x_2)$ is a burning sequence.   We note that after the choice of $x_1$, the fire propagates to all vertices of $K_n$ in time step 2, making the choice of $x_2$ redundant.  However, we still choose  vertex $x_2$ so that the length of the burning sequence is equal to the number of time steps used in the burning process.

The burning numbers of various classes of graphs have been determined; this  includes paths and cycles \cite{H2BG},  and complete bipartite graphs \cite{BGM}.  Upper and lower bound on the burning numbers of the Cartesian products and lexicographic products of graphs have also been determined \cite{BGM}.   In \cite{BGmodl}, it was shown that the burning number of a graph of order $n$ is bounded above by approximately $\sqrt{2n}$, and the authors conjectured that this could be improved to $\sqrt n$.   Progress has been made on this, with an upper bound of approximately $1.309 \sqrt{n}$ being established in  \cite{BBJ}.  

In this paper, we examine the burning number of circulant graphs.     Due to properties such as symmetry, scalability, and small average node distance, circulant graphs serve as good models for local area networks and parallel computer architectures \cite{Xu}.  (For a survey on circulant graphs see \cite{Hwang}.)  

The {\it circulant graph on $n$ vertices with distance set} $S$ has vertex set ${\mathbb Z}_n$  and edge set $ \{xy | x-y \in S\}$, where $S \subseteq { \mathbb Z}_n$ and $x \in S$ implies $-x \in S$, with addition  done modulo $n$.    We denote this circulant graph $C(n,S)$.   Circulant graphs are regular and vertex transitive, and are a subset of the more general family of Cayley graphs.  

Since $S$ can be written as $S= \{s_1, -s_1, s_2, -s_2, \ldots , s_t, -s_t\}$ where  $0 < s_1<s_2< \cdots < s_t \le n/2$,  we also use the notation $C(n;s_1, s_2, \ldots, s_t)$ to represent $C(n,S)$.   The notation $C(n;s_1, s_2, \ldots, s_t)$  is used in the majority of the paper, with the notation $C(n,S)$ mainly appearing in the final section regarding lexicographic products.    Also note that we limit our discussion to {\it connected} circulant graphs.  Therefore, we assume  $gcd(n,s_1,s_2, \ldots, s_t) =1$, since $S$ must generate $\mathbb Z_n$. 

In Section 2 of this paper, we begin by finding the burning number of $C(n;1, n/2)$ for any even integer $n$ such that $n \ge 4$.  In Section 3, we find upper and lower bounds for the class of $4$-regular circulant graphs $C(n;1,m)$ where $1< m < \frac n2$.  We also give improved bounds for the case when $n$ is a multiple of $m$, and show that, asymptotically, the burning number of $C(n;1,m)$ is $\sqrt \frac nm$ for a constant $m$ such that $1< m < \frac n2$.  In Section 4, we give the burning numbers for $C(n;1,m)$ for the specific cases $m = 2$ and $m=3$, improving the general results from Section 3 in these cases.     Finally, in Section 5, we examine the burning numbers of circulant graphs of higher degree, including those found by forming the lexicographic product of $C(n;1,m)$ with another circulant graph.

\section{Burning Numbers of 3-Regular Circulant Graphs}

We now consider the graph $C(n;1,m)$ where $n$ is even and $m  = n/2$.  This is the class of all 3-regular circulant graphs. In this section, we begin by finding the $\ell^{th}$ neighbourhood of each vertex.  
 
\begin{lem}\label{m2neigh}
Suppose $G \cong C(n;1,n/2)$, where $n$ is even and $n \ge 4$.  For any $x \in  V(G)$ and any $\ell$ such that $1 \le \ell \le n/4$,  $N_\ell [x] = [x-\ell, x+\ell] \cup [x - \ell +\frac n2+1, x + \ell +\frac n2 -1] $.
\end{lem}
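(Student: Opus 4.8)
The plan is to induct on $\ell$, using the one-step recursion $N_{\ell+1}[x]=\bigcup_{y\in N_\ell[x]}N_1[y]$, which expresses the $(\ell+1)$st closed neighbourhood as the set of all vertices reachable in one further step from $N_\ell[x]$. First I would record the base case $\ell=1$: since the neighbours of $x$ in $C(n;1,n/2)$ are $x-1$, $x+1$, and $x+\tfrac n2$ (recalling $-\tfrac n2\equiv\tfrac n2\pmod n$, so there is a single ``diameter'' neighbour), we have $N_1[x]=\{x-1,x,x+1\}\cup\{x+\tfrac n2\}$, which is exactly $[x-1,x+1]\cup[x+\tfrac n2,x+\tfrac n2]$, matching the claimed formula at $\ell=1$.

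For the inductive step I would assume $N_\ell[x]=[x-\ell,x+\ell]\cup[x+\tfrac n2-\ell+1,x+\tfrac n2+\ell-1]$ for some $\ell\ge 1$ with $\ell+1\le n/4$, and then analyse separately the new vertices contributed by the cycle edges ($\pm 1$) and by the diameter edges ($+\tfrac n2$). The cycle edges extend each arc by one vertex at each end, sending $[x-\ell,x+\ell]$ to $[x-\ell-1,x+\ell+1]$ and the antipodal arc to $[x+\tfrac n2-\ell,x+\tfrac n2+\ell]$. The diameter edges send $[x-\ell,x+\ell]$ onto $[x+\tfrac n2-\ell,x+\tfrac n2+\ell]$ (including its two new endpoints $x\pm\ell+\tfrac n2$), and symmetrically carry the antipodal arc back into $[x-\ell,x+\ell]$; hence they produce no vertex outside the two arcs already obtained from the cycle edges. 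Combining these shows $N_{\ell+1}[x]=[x-\ell-1,x+\ell+1]\cup[x+\tfrac n2-\ell,x+\tfrac n2+\ell]$, which is precisely the claimed formula with $\ell$ replaced by $\ell+1$.

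The step I expect to be the main obstacle is verifying that these genuinely remain two \emph{disjoint} arcs of $\mathbb Z_n$ throughout the induction — that neither wraps around to meet the other, and that the endpoints added at each stage are actually new. This is exactly where the hypothesis $\ell\le n/4$ enters: the resulting arcs $[x-\ell-1,x+\ell+1]$ and $[x+\tfrac n2-\ell,x+\tfrac n2+\ell]$ are disjoint and correctly ordered precisely when $x+\ell+1<x+\tfrac n2-\ell$ and $x+\tfrac n2+\ell<x+n-\ell-1$ (read cyclically), both of which reduce to $\ell+1\le n/4$. For $\ell$ beyond $n/4$ the arcs would overlap and the closed-form description would break down, so $\ell\le n/4$ is the sharp range; I would verify these two inequalities explicitly to close the induction.

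As an alternative non-inductive route, I might note that vertex-transitivity makes $d(x,y)$ depend only on $d=y-x$, and that any shortest path uses the diameter edge at most once (using it twice returns to the same residue class at extra cost), so $d(x,y)=\min\{\mathrm{cyc}(d),\,1+\mathrm{cyc}(d-\tfrac n2)\}$, where $\mathrm{cyc}(\cdot)$ denotes ordinary cycle distance. Collecting the $d$ with $d(x,y)\le\ell$ then yields the two arcs directly. The inductive argument, however, keeps the bookkeeping of the $\ell\le n/4$ constraint most transparent, so I would present that as the main proof.
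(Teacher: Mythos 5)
Your proposal is correct and follows essentially the same route as the paper: induction on $\ell$ via $N_{\ell+1}[x]=\bigcup_{y\in N_\ell[x]}N_1[y]$, with the base case $N_1[x]=[x-1,x+1]\cup\{x+\tfrac n2\}$ and the observation that the $\pm1$ edges widen each arc by one while the $+\tfrac n2$ edges map each arc into the (extended) other. Your explicit verification that the two arcs stay disjoint for $\ell\le n/4$ is a detail the paper leaves implicit, but it does not change the argument.
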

\begin{proof}
Consider  vertex $i$ in $G$.  It can be easily verified that $N_1[i] = \{i -1, i, i+1, i+\frac n2\}$.  Therefore, $N_1[i] = [i-1,i+1] \cup [i+\frac n2, i+\frac n2]$.  We now proceed by induction.

Assume that $N_\ell [x] = [x-\ell, x+\ell] \cup \left [x+\frac n2 - \ell +1, x+\frac n2 + \ell -1 \right ] $, where $0 \le \ell \le \frac n4 -1$.  We now find  $N_{\ell+1}[0]$, noting that  $N_\ell [0] = [-\ell, \ell] \cup \left [\frac n2 - \ell +1, \frac n2 + \ell -1 \right ] $:
   \begin{eqnarray*}
   N_{\ell +1} [0] & =  & \bigcup_{i \in N_{\ell} [0]} N_1 [i]  \\
   &= & \bigcup_{i \in N_{\ell} [0]} \left \{ -1+i, i, i+1, i+\frac n2 \right\}  \\
   & = & [-\ell -1, \ell +1]  \cup\left [\frac n2 - \ell, \frac n2 + \ell \right]  \end{eqnarray*}

It can be similarly shown that $N_{\ell +1} [x] = [x-\ell-1, x+\ell+1] \cup\left [i+\frac n2 - \ell , i+\frac n2 + \ell \right] $.  The result follows by induction.  
\end{proof}

We now find the burning number of $C(n;1,n/2)$ for each $n \ge 4$.   A lower bound on $C(n;1,n/2)$ is first calculated using the cardinality of each of $\ell^{th}$ neighbourhoods.  The reader is then provided with a burning sequence that proves to be optimal.

\begin{thm}\label{half}
Suppose $G \cong C(n;1,n/2)$, where $n$ is even and $n \ge 4$.  Then $b(G) = \left \lceil \frac{1+\sqrt{2n+1}}{2} \right \rceil $.
\end{thm}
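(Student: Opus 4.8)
The plan is to bound $b(G)$ from below by a counting argument on neighbourhood sizes, and from above by exhibiting an explicit burning sequence; the two bounds will meet at $k = \lceil (1 + \sqrt{2n+1})/2 \rceil$.

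First I would record the cardinalities implied by Lemma \ref{m2neigh}: for $1 \le \ell \le n/4$ the two arcs contain $2\ell + 1$ and $2\ell - 1$ vertices, so $|N_\ell[x]| = 4\ell$, while $|N_0[x]| = 1$. For the lower bound, recall that a burning sequence $(x_1, \ldots, x_k)$ satisfies $V(G) = \bigcup_{i=1}^{k} N_{k-i}[x_i]$, whence $n \le \sum_{i=1}^{k} |N_{k-i}[x_i]| \le 1 + \sum_{\ell=1}^{k-1} 4\ell = 2k^2 - 2k + 1$ (the radii $k-i \le k-1 \le n/4$ stay inside the range of the Lemma once $n$ is past the few smallest values, which I would check by hand). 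Solving the inequality $2k^2 - 2k + 1 \ge n$ for the integer $k$ yields $b(G) \ge \lceil (1+\sqrt{2n+1})/2 \rceil$; the apparent discrepancy between the root $(1+\sqrt{2n-1})/2$ that the inequality produces and the stated expression vanishes after taking ceilings, since $n$ is even and the only value where the two ceilings could differ is $n = 2k(k-1)$, where both equal $k$.

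For the matching upper bound I would construct a burning sequence of length $k = \lceil (1+\sqrt{2n+1})/2 \rceil$, assigning radius $r_i = k-i$ to $x_i$, so the balls have radii $k-1, k-2, \ldots, 0$ and total capacity $\sum_i |N_{r_i}[x_i]| = 2k^2 - 2k + 1 \ge n$. The governing idea is the two-arc description from the Lemma: $N_r[x]$ is a ``near'' arc of radius $r$ about $x$ together with a ``far'' arc of radius $r-1$ about the antipode $x + n/2$. Passing to the quotient cycle $\mathbb{Z}_{n/2}$ obtained by identifying $x$ with $x + n/2$, I would regard $V(G)$ as two sheets of length $n/2$ and observe that a source placed on one sheet covers radius $r_i$ on its own sheet and radius $r_i - 1$ on the opposite sheet. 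The plan is to distribute the $k$ sources between the two sheets and choose their quotient positions so that the near- and far-arcs together tile both sheets; since the combined arc length $2k^2 - 2k + 1$ is at least $n = 2\cdot(n/2)$, there is enough total length, and the largest balls can be seated at the seams to absorb the surplus.

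The hard part is precisely this placement. Unlike a free interval-covering problem, where a total length of $n$ would at once guarantee a covering by laying arcs end to end, here a ball's near-arc and far-arc are rigidly coupled: they occupy the same quotient position but lie on opposite sheets with radii differing by one. This coupling blocks any one-line counting conclusion and is what makes the construction delicate; in particular the deficit $n/2 - (k-1)^2$ of the far-arcs, which can be as large as roughly $k-1$, must be made up by near-arcs spilling across a seam. I expect to handle this by giving explicit positions $x_i$ in terms of cumulative partial sums of the arc lengths, seating the radius-$(k-1)$ ball at the boundary between the sheets so that its near-arc covers the far-arcs' deficit, and then verifying directly that no vertex is left uncovered, most likely after splitting into a small number of cases governed by the size of the surplus $2k^2 - 2k + 1 - n$. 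Finally I would dispose of the small cases $n = 4, 6$ separately, where a single ball already covers $V(G)$.
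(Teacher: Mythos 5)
Your lower bound is complete and is essentially the paper's. The only cosmetic difference is that the paper invokes the parity of $n$ one step earlier, replacing $n \le 2k^2-2k+1$ by $n \le 2k^2-2k$ before solving for $k$, whereas you solve first and reconcile the two ceilings afterwards; your reconciliation is valid in general, since an integer strictly between $\frac{1+\sqrt{2n-1}}{2}$ and $\frac{1+\sqrt{2n+1}}{2}$ would force $(2k-1)^2=2n$, impossible by parity. Your parenthetical worry about radii exceeding $n/4$ is harmless: for $\ell > n/4$ one has $4\ell > n \ge |N_\ell[x]|$, so the bound $|N_\ell[x]|\le 4\ell$ holds trivially.

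The genuine gap is the upper bound. You have correctly diagnosed what makes the construction delicate --- the near-arc of radius $r$ and the far-arc of radius $r-1$ are rigidly coupled at antipodal positions, the far-arcs alone can fall short of covering a sheet by up to $k-1$ vertices, and a ball must be seated at the seam to absorb this deficit --- and this matches what the paper actually does: its sequence places sources at quadratic positions of the form $x_j = -2kj-2k+j^2-1$ or $\frac n2 -2kj-2k+j^2-1$ according to the parity of $j$ (cumulative partial sums of arc lengths, exactly as you anticipate), with a special seam placement $x_{k-1}=\frac n2 + k$ when $k$ is even. But your proposal stops at the point where you say you ``expect to handle'' the placement by giving explicit positions and verifying coverage. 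Those explicit positions, together with the induction showing that $\bigcup_{i} N_{k-i}[x_i]$ is a union of two intervals $[f(j),k-1]\cup[g(j),\frac n2+k-2]$ that eventually swallow $[0,n-1]$ (with a case split on the parity of $k$), constitute essentially the entire body of the paper's proof. As written, the existence of a valid placement is asserted rather than demonstrated --- total arc length $2k^2-2k+1\ge n$ does not by itself imply a tiling exists under the coupling constraint you yourself identify --- so the upper bound, and hence the stated equality, is not yet established.
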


\begin{proof}
Let $k = b(G)$.  There is an optimal burning sequence $(x_1, x_2, \ldots, x_k)$, where $n = |N_0(x_k) \cup N_1(x_{k-1}) \cup \cdots \cup N_{k-1}(x_1)|$.  From Lemma \ref{m2neigh}, we find that $\left |N_\ell [x] \right | \le (2\ell + 1) + (2 \ell -1) = 4 \ell$ for any $x \in V(G)$ and $\ell \ge 1$.  We also note that $|N_0 (x)|= 1$.  Therefore, $\sum_{\ell = 0}^{k-1}|N_{i}[x_{k-i}]| \le1 + \sum_{\ell = 1}^{k-1} 4\ell =  1 + 2k^2 - 2k$, and  $n \le 
1 + 2k^2 - 2k$.  However, since $n$ is even, we have $n \le 2k^2 - 2k$.  It follows that $k \ge  \frac{1+\sqrt{2n+1}}{2}  $, and $b(G) \ge \left \lceil \frac{1+\sqrt{2n+1}}{2} \right \rceil $.

\medskip

Now let $k= \left \lceil \frac{1+\sqrt{2n+1}}{2} \right \rceil$, and consider the vertex sequence $(x_1, x_2, \ldots , x_k)$ where  
$$x_{j} = \begin{cases}
 \frac n2 -2kj-2k + j^2 -1  & :\,\mbox{j is even}\\
 -2kj-2k + j^2 -1 & :\,\mbox{j is odd and $j \neq k-1$}\\
\frac n2 +k & :\,\mbox{ j is odd and $j = k-1$}\\
 \end{cases}$$

Since $N_\ell [x] = [x-\ell, x+\ell] \cup [x+\frac n2 - \ell +1, x+\frac n2 + \ell -1] $, it follows that when $j$ is odd
\begin{eqnarray*}
N_{k-j}[x_j] &= &[-2kj+ j^2 +j +k-1, -2kj+j^2-j+3k-1] \\ & &  \cup \ \left [\frac n2 -2jk+j^2 + j + k, \frac n2 -2jk+j^2 - j + 3k-2 \right ]
\end{eqnarray*}
and when $j$ is even
\begin{eqnarray*}
N_{k-j}[x_{j}]  & = &\left [\frac n2-2kj+ j^2 +j +k-1, \frac n2-2kj+j^2-j+3k-1 \right ] \\ & &  \cup \ \left [-2jk+j^2 + j + k, -2jk+j^2 - j + 3k-2 \right ]
\end{eqnarray*}

\medskip

 For convenience, let $f(j) =  -2\,jk+{j}^{2}+j+k$ and $g(j) = \frac n2-2jk+j^2+j+k-1$.  
We claim that for any even $j$ such that $2 \le j \le k-1$, 
\begin{eqnarray*}
\bigcup_{i=1}^{j}N_{k-i}[x_i] & =  &  [ f(j), k-1] \cup \left [g(j), \frac n2 +k-2 \right ].  \end{eqnarray*}

%We claim that for any odd $j$,
%\begin{eqnarray*}
%\bigcup_{i=1}^{j+1}N_{k-i}[x_i] & =  &  [ -2\,jk+{j}^{2}+3j-k+2, k-1] \cup \left [\frac n2-2jk+j^2-k+3j+1, \frac n2 +k-2 \right ]   \end{eqnarray*}

%\begin{eqnarray*}
%N_{k-1}[x_1] \cup N_{k-2}[x_2] \cup \cdots \cup N_{k-j}[x_j] \cup N_{k-{(j+1)}}[x_{j+1}]   
%&  [ -2\,jk+3{j}^{2}+ \,j-k+2, k-1] \cup [-k+3j+\frac n2-2jk+j^2+1, \frac n2 +k-2]   \end{eqnarray*}

First, we verify the result for $j=2$: 
\begin{eqnarray*}
 N_{k-1}[x_1] \cup N_{k-2}[x_2]  & = & [-k+1, k-1] \cup \ \left [\frac n2 - k+2, \frac n2 +k-2 \right ] 
  \\  & &   \cup  \left  [ \frac n2-3k+5, \frac n2-k+1 \right ]  \cup \  [ -3k+6,-k] \\
  & = & [ -3k+6, k-1]  \cup  \left [ \frac n2-3k+5,\frac n2 +k-2 \right ]  \\
  & = &  [ f(2), k-1] \cup \left [g(2), \frac n2 +k-2 \right ]
  \end{eqnarray*}
  
Next, assume that the result holds for some even $j$ such that $1 \le j \le k-3$.  Hence, 

\begin{eqnarray*}
\bigcup_{i=1}^{j+2}N_{k-i}[x_i] & =  &  [f(j), k-1] \cup \left  [ g(j), \frac n2 +k-2 \right ]  \\
& & \cup \, N_{k-(j+1)}[x_{j+1}] \, \cup N_{k-(j+2)}[x_{j+2}] \\
    \end{eqnarray*}

 Since $j$ is even, we have
{\footnotesize { \begin{eqnarray*} 
 & & N_{k-(j+1)} [x_{j+1}] \, \cup \, N_{k-(j+2)}[x_{j+2}]  \\
 & = & [-2kj+j^2+3j-k+1, -2kj +j^2+j+k-1] \cup \left [ \frac n2 -2kj+j^2+3j-k+2, \frac n2 -2kj +j^2+j+k-2 \right ] \\
 &   \cup &\left [ \frac n2 -2kj+j^2+5j-3k+5, \frac n2 -2kj +j^2+3j-k+1 \right ] \, \cup \, [-2kj+j^2 +5j-3k+6, -2kj +j^2+3j-k] \\
& = & [-2kj+j^2 +5j-3k+6, -2kj +j^2+j+k-1] \cup  \left [ \frac n2 -2kj+j^2+5j-3k+5,\frac n2 -2kj +j^2+j+k-2 \right ] \\
& = & [f(j+2), f(j)-1] \cup  \left [ g(j+2),g(j) -1 \right ] \\
 \end{eqnarray*}}}
  Therefore,  
 $ \displaystyle \bigcup_{i=1}^{j+2}N_{k-i}[x_i]  =  [f(j+2), k-1] \cup  \left [g(j+2),\frac n2+k-2 \right ]$. We now consider two cases, based on the parity of $k$.

{\it Case 1: $k$ is even.}   By induction, when $k$ is even,  $k-2$ is even and $ \displaystyle \bigcup_{i=1}^{k-2}N_{k-i}[x_i]  =  [f(k-2), k-1] \cup  \left [g(k-2),\frac n2+k-2 \right ] = [-k^2+2k+2, k-1 ] \cup [\frac n2 -k^2+2k+1, \frac n2 +k-2]$.

Since  $k \ge \frac{1+\sqrt{2n+1}}{2}$, then $k(k-1) \ge \frac n2$.  Therefore, $-k^2 + 2k +2 \le - \frac n2 +k+2$ and $\frac n2 -k^2+2k+1 \le  k+1$.  Hence, $\bigcup_{i=1}^{k-2}N_{k-i}[x_i]  \supseteq [- \frac n2 +k+2, k-1 ] \cup [k+1, \frac n2 +k-2]$.

Now, if we have $x_{k-1} = \frac n2 + k$, then $N_1[x_{k-1} ]=\left  [\frac n2 +k - 1, \frac n2 +k+1 \right] \cup \{k\}$.
 Therefore, 
 $\bigcup_{i=1}^{k-1}N_{k-i}[x_i]  \supseteq [- \frac n2 +k+2,  \frac n2 +k+1] = [0,n-1]$.

\medskip

{\it Case 2: $k$ is odd.}   By induction, $ \displaystyle \bigcup_{i=1}^{k-1}N_{k-i}[x_i]  =  [f(k-1), k-1] \cup  \left [g(k-1),\frac n2+k-2 \right ] = [-k^2+2k, k-1] \cup [\frac n2 -k^2+2k-1, \frac n2 +k-2] $.
Therefore, {\footnotesize
\begin{eqnarray*}
   \bigcup_{i=1}^{k}N_{k-i}[x_i] 
& =  &[-k^2+2k, k-1] \cup\left [\frac n2 -k^2+2k-1, \frac n2 +k-2 \right ]  \cup N_0[x_k] \\
&=  &[-k^2+2k, k-1] \cup \left [\frac n2 -k^2+2k-1, \frac n2 +k-2\right ]  \cup \{-k^2-2k-1\} \\
&=  &[-k^2+2k-1, k-1] \cup \left [\frac n2 -k^2+2k-1, \frac n2 +k-2\right ]
\end{eqnarray*}}

We know $\frac{1+\sqrt{2n+1}}{2} \le k < \frac{1+\sqrt{2n+1}}{2}+1$.  Therefore, $-k^2 + k -1< -\frac{1+2\sqrt{2n+1} + 2n + 1}{4} + \frac{1+\sqrt{2n+1}}{2}  = - \frac n2 $.  It follows that   $-k^2 + 2k -1 \le  - \frac n2 + k -1$  and $\frac n2 -k^2 +2k-1 \le  k-1$.
Therefore,  $[ - \frac n2 + k-1, k-1] \subseteq [-k^2+2k-1, k-1]$ and $[k-1, \frac n2 +k-2] \subseteq  \left [\frac n2-k^2+2k-1, \frac n2 +k-2 \right ]$.  Therefore,  $   \bigcup_{i=1}^{k}N_{k-i}[x_i]  \supseteq [ - \frac n2 + k-1, k-1] \cup [k-1, \frac n2 +k-2] = \left [ - \frac n2 + k-1, \frac n2 +k-2\right ] = [0, n-1]$.   Therefore,  $(x_1, \ldots , x_k)$ is a burning sequence in $G$, and $b(G)\le \left \lceil\frac{1+\sqrt{2n+1}}{2} \right \rceil $.  The result follows.
\end{proof}

\section{Bounds on the Burning Numbers of 4-regular Circulant Graphs}

We now examine 4-regular circulant graphs of the form $C (n;1,m)$, where  $1 < m < n/2$.  Again, we begin by determining the $\ell^{th}$ neighbourhood of each vertex and its cardinality.
\begin{lem}\label{neighbourhoods}
Let $G \cong C(n;1,m)$ for some $m < n/2$.  For any $\ell \ge 0$, $N_\ell[x] = \bigcup_{j=0}^\ell ([x+ (j-\ell)m-j, x+(j-\ell)m+j] \cup [x+ (\ell-j)m-j, x+(\ell-j)m+j] )$.
\end{lem}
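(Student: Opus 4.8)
The plan is to first rewrite the claimed set in a more transparent form, and then establish the equality by two direct containments rather than by the interval-chasing induction used in Lemma \ref{m2neigh}.

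\emph{Reinterpreting the right-hand side.} In the first family of intervals set $p = j - \ell$, and in the second set $p = \ell - j$; as $j$ runs from $0$ to $\ell$ the index $p$ runs over $\{-\ell, \dots, 0\}$ and over $\{0, \dots, \ell\}$ respectively, and in both cases the radius $j$ equals $\ell - |p|$. Hence the claimed set is
\[
D_\ell := \bigcup_{p=-\ell}^{\ell} \bigl[\, x + pm - (\ell - |p|),\ x + pm + (\ell - |p|)\, \bigr].
\]
A point written as $x + pm + q$ (with $p, q \in \mathbb{Z}$) lies in $D_\ell$ exactly when $|p| + |q| \le \ell$. So the lemma asserts precisely that $N_\ell[x]$ is the set of all $y \in \mathbb{Z}_n$ admitting a representation $y \equiv x + pm + q \pmod n$ with $|p| + |q| \le \ell$.

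\emph{The two containments.} For $D_\ell \subseteq N_\ell[x]$, given $y = x + pm + q$ with $|p| + |q| \le \ell$, I would exhibit the explicit walk from $x$ that takes $|p|$ steps along the generator $\pm m$ (with the sign of $p$) followed by $|q|$ steps along $\pm 1$ (with the sign of $q$); this is a walk of length $|p| + |q| \le \ell$, so $d(x,y) \le \ell$ and $y \in N_\ell[x]$. For $N_\ell[x] \subseteq D_\ell$, take any $y$ with $d(x,y) \le \ell$ and fix a shortest $x$--$y$ walk; let $a, b, c, d'$ be the numbers of steps equal to $+m, -m, +1, -1$, so that $a + b + c + d' = d(x,y) \le \ell$ and $y \equiv x + (a-b)m + (c - d') \pmod n$. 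Putting $p = a - b$ and $q = c - d'$ gives $|p| + |q| \le (a+b) + (c + d') \le \ell$, so $y \in D_\ell$.

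\emph{Main obstacle.} The only real subtlety is modular wraparound: the lemma is stated for \emph{every} $\ell \ge 0$ with no upper bound (in contrast to Lemma \ref{m2neigh}, where $\ell \le n/4$ keeps the two intervals disjoint). Once $\ell$ is large the intervals comprising $D_\ell$ overlap and wrap around $\mathbb{Z}_n$, so an induction that tracks disjoint intervals and their merges, as in Lemma \ref{m2neigh}, would require delicate bookkeeping to remain valid. I would therefore favour the walk-counting argument above, since both containments are computed directly in $\mathbb{Z}_n$ and are insensitive to whether the intervals wrap or coincide. If an inductive proof in the style of Lemma \ref{m2neigh} is preferred, the recursion $N_{\ell+1}[x] = \bigcup_{i \in N_\ell[x]} N_1[i]$ with $N_1[i] = \{i - m, i-1, i, i+1, i+m\}$ still works: the operations $\pm 1$ enlarge the radius of each center-$(x+pm)$ interval from $\ell - |p|$ to $(\ell+1) - |p|$, producing the interior intervals of $D_{\ell+1}$; the operation $\pm m$ applied to the two extreme singletons $x \pm \ell m$ produces the new centers $x \pm (\ell+1)m$, while $\pm m$ on an interior interval lands inside an existing one by the triangle inequality $|p \pm 1| \le |p| + 1$. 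To sidestep the disjointness issue, each of these should be phrased as a containment together with the observation that $D_\ell \subseteq D_{\ell+1}$.
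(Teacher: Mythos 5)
Your proof is correct, but it takes a genuinely different route from the paper's. The paper argues by induction on $\ell$ via $N_{\ell+1}[x]=\bigcup_{i\in N_\ell[x]}N_1[i]$: the $\pm 1$ generator widens each interval by one on each side, and the $\pm m$ generator is asserted to contribute only the two new extreme vertices $x\pm(\ell+1)m$, every other $\pm m$-translate being absorbed into an existing interval. That absorption step is exactly the triangle inequality $|p\pm 1|\le|p|+1$ you spell out in your fallback sketch; the paper leaves it implicit. Your primary argument instead identifies the right-hand side as the ``$\ell_1$-ball'' $\{\,y\equiv x+pm+q \pmod n : |p|+|q|\le\ell\,\}$ and proves the two containments by counting generator steps along walks (a walk realizing a representation gives $d(x,y)\le|p|+|q|$; a shortest walk gives a representation with $|p|+|q|\le d(x,y)$). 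This is shorter, makes the geometric content transparent, and, as you correctly note, is insensitive to the overlap and modular wraparound of the intervals that occur once $\ell$ is large --- a point the paper's interval-tracking induction does not address even though the lemma is claimed for all $\ell\ge 0$. What the paper's approach buys is that it delivers the decomposition in exactly the interval-by-interval form consumed later in Corollaries \ref{neighboursize1} and \ref{neighboursize2}, but your characterization recovers the same decomposition at once by partitioning on $p$, so nothing is lost.
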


\begin{proof}
It is straightforward to verify that the result holds when $\ell = 0$ and $\ell = 1$.  We proceed by induction.  Assume that $N_k[x] = \bigcup_{j=0}^k ([x+ (j-k)m-j, x+(j-k)m+j] \cup [x+ (k-j)m-j, x+(k-j)m+j] )$ for some $k \ge 1$.  Now consider $N_{k +1} [0]= \bigcup_{i \in N_{k} [0]} N_1 [i]$.  

When comparing $N_{k}[0]$  and  $N_{k+1} [0]$, each interval  $[(j-k)m-j, (j-k)m+j]$ in the expression for $N_k[0]$ is replaced with  $[(j-k)m-j-1, (j-k)m+j+1]$ in $N_{k+1}[0]$.  Similarly, each $[ (k-j)m-j, (k-j)m+j] $ is replaced with $[ (k-j)m-j-1, (k-j)m+j+1] $ due to the appearance of 1 in the distance set.  Furthermore,  as a result of $m$ being in the distance set, only two additional vertices, $(-k - 1)m$ and $(k +1)m$  are added. Therefore, $N_{k+1}[0] = \bigcup_{j=0}^k ([ (j-k)m-j-1,(j-k)m+j+1] \cup [(k-j)m-j-1, (k-j)m+j+1]) \cup \{-(k+1)m, (k+1)m\} = \bigcup_{j=0}^{k+1} ([ (j-k)m-j,(j-k)m+j] \cup [(k-j)m-j, (k-j)m+j])$.

 By induction, the result follows for $x=0$.  By symmetry, the result follows for all $x$.
\end{proof}

\begin{cor}\label{neighboursize1}
Let $G \cong C(n;1,m)$ for some $m < n/2$.  For any $\ell \ge 0$ and vertex $x$ in $G$, $|N_\ell[x] | \le 2\ell^2 + 2 \ell +1$.
\end{cor}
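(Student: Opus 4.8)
The plan is to bound the cardinality of $N_\ell[x]$ directly from the explicit union formula given in Lemma~\ref{neighbourhoods}. By vertex-transitivity it suffices to take $x=0$, so I would work with
$$N_\ell[0] = \bigcup_{j=0}^\ell \bigl([(j-\ell)m-j, (j-\ell)m+j] \cup [(\ell-j)m-j, (\ell-j)m+j]\bigr).$$
The idea is to count, for each $j$, the number of integers contributed by the two intervals indexed by $j$, and then sum over $j$, accepting that overlaps between intervals (and the wraparound modulo $n$) can only \emph{decrease} the count. This is why the statement is an inequality $|N_\ell[x]|\le 2\ell^2+2\ell+1$ rather than an equality, and it lets me avoid any delicate analysis of when intervals coincide.

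First I would observe that each interval $[(j-\ell)m-j,(j-\ell)m+j]$ has length $2j$, hence contains at most $2j+1$ integers, and likewise for $[(\ell-j)m-j,(\ell-j)m+j]$. For a fixed $j$ with $1\le j\le \ell$ the two intervals are centred at $(j-\ell)m$ and $(\ell-j)m$ respectively, which are distinct (as $j\neq \ell$ and $m>0$), so naively they contribute at most $2(2j+1)=4j+2$ integers. For $j=\ell$ the two intervals coincide (both centred at $0$), contributing at most $2\ell+1$. Summing these upper bounds, I would compute
$$(2\ell+1) + \sum_{j=1}^{\ell-1} (4j+2) = (2\ell+1) + 4\cdot\frac{(\ell-1)\ell}{2} + 2(\ell-1),$$
which should simplify to exactly $2\ell^2+2\ell-1$, and then account for the single extra vertex. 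The cleaner route is to treat the $j=\ell$ term as $2\ell+1$ and bound each $j<\ell$ pair by $4j+2$, and check that the total telescopes to the claimed $2\ell^2+2\ell+1$; I would verify the two small cases $\ell=0$ (giving $1$) and $\ell=1$ (giving $5$) by hand against the formula to make sure the arithmetic constant is right.

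The main obstacle, and really the only subtlety, is bookkeeping the boundary index $j=\ell$ correctly: there the two intervals in the union are identical, so double-counting them would overshoot and produce a bound that is too weak or, worse, an incorrect constant. I would therefore handle $j=\ell$ separately from the range $1\le j\le \ell-1$, and note that since the result is only an upper bound I am free to discard all inter-interval overlaps and all modular coincidences without further justification. A clean way to present this is simply $|N_\ell[0]|\le \sum_{j=0}^\ell(2j+1) + \sum_{j=1}^{\ell-1}(2j+1)$, where the first sum counts one interval per index and the second counts the second interval for each $j$ strictly between $0$ and $\ell$; evaluating $\sum_{j=0}^\ell(2j+1)=(\ell+1)^2$ and $\sum_{j=1}^{\ell-1}(2j+1)=\ell^2-1$ and adding gives $2\ell^2+2\ell+1$, which is exactly the claimed bound.
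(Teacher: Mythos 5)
Your overall strategy is exactly the paper's: take $x=0$, bound each interval in the union of Lemma~\ref{neighbourhoods} by its length, treat the coincident pair at $j=\ell$ once, and sum. However, your bookkeeping of the index $j=0$ is wrong, and as a result neither of your two computations actually produces the claimed bound. At $j=0$ the two intervals are the distinct singletons $\{-\ell m\}$ and $\{\ell m\}$, so this index contributes up to $2$ vertices, yet your first sum $(2\ell+1)+\sum_{j=1}^{\ell-1}(4j+2)$ omits $j=0$ entirely and evaluates to $2\ell^2+2\ell-1$ (you are short by $2$, not by ``a single extra vertex''), while your ``clean'' decomposition $\sum_{j=0}^{\ell}(2j+1)+\sum_{j=1}^{\ell-1}(2j+1)$ counts only one of the two $j=0$ singletons and evaluates to $(\ell+1)^2+(\ell^2-1)=2\ell^2+2\ell$, not $2\ell^2+2\ell+1$ as you assert. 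This is not merely a cosmetic slip: since the goal is an upper bound, an undercounted sum is not a valid bound at all. Indeed for $\ell=1$ your clean formula gives $4$, whereas $N_1[0]=\{-m,-1,0,1,m\}$ has $5$ elements; the sanity check you propose would have caught this.

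The fix is one index: the second sum must run over $j=0,\dots,\ell-1$ (every index except the coincident $j=\ell$ contributes two intervals), giving
\begin{equation*}
|N_\ell[0]| \le \left(\sum_{j=0}^{\ell-1} 2(2j+1)\right) + (2\ell+1) = 2\ell^2 + 2\ell + 1,
\end{equation*}
which is precisely the computation in the paper. With that correction your argument is complete and identical in substance to the published proof, including the observation that overlaps and modular wraparound only decrease the count, which is what makes the inequality (rather than an equality) the right statement.
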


\begin{proof}
Consider $N_\ell[0] = \bigcup_{j=0}^\ell ([ (j-\ell)m-j, (j-\ell)m+j] \cup [ (\ell-j)m-j, (\ell-j)m+j] )$ where $\ell \ge 0$. We note that $[ (j-\ell)m-j, (j-\ell)m+j] = [ (\ell-j)m-j, (\ell-j)m+j]  =[-\ell,\ell ]$ when $j = \ell$.  Furthermore, when $j \neq \ell$,  $|[ (j-\ell)m-j, (j-\ell)m+j]| = | [ (\ell-j)m-j, (\ell-j)m+j] | = 2j+1$.  It follows that $|N_\ell[x]| = |N_\ell[0]| \le \left ( \sum_{j=0}^{\ell-1} 2(2j+1) \right ) + (2\ell +1) =2\ell^2 + 2 \ell + 1$.
\end{proof}

We note that $|N_\ell[x] | = 2\ell^2 + 2 \ell +1$ is only achieved  when all the intervals in the union given in Lemma \ref{neighbourhoods} are disjoint.  When $\ell > \frac m2$, this will not be the case, and the upper bound on  $|N_\ell[x] |$ given in Corollary \ref{neighboursize1} can be improved.

\begin{lem}\label{neighbourhood1}
Let $G \cong C(n;1,m)$ where $1< m < n/2$.  For any $\ell$ such that $\ell > m/2$, 
{\small \begin{eqnarray*}
N_\ell [x] & =& \left ( \bigcup_{j=0}^{ \left \lfloor \frac m2 \right \rfloor -1} \left ( [x+ (j-\ell)m-j, x+(j-\ell)m+j] \cup [x+ (\ell-j)m-j, x+(\ell-j)m+j]  \right ) \right ) \\
& & \cup \left [ x- \left (\ell- \left \lfloor \frac m2 \right \rfloor \right )m - \left \lfloor \frac m2 \right \rfloor  , x+ \left (\ell- \left \lfloor \frac m2 \right \rfloor \right )m + \left \lfloor \frac m2 \right \rfloor \right ].
\end{eqnarray*}}
\end{lem}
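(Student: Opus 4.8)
The plan is to derive this directly from Lemma~\ref{neighbourhoods} by showing that the intervals indexed by the larger values of $j$ coalesce into the single central interval. By vertex-transitivity it suffices to treat $x=0$, so I would write the expression of Lemma~\ref{neighbourhoods} as $N_\ell[0]=\bigcup_{j=0}^{\ell}(L_j\cup R_j)$, where $R_j=[(\ell-j)m-j,(\ell-j)m+j]$ and $L_j=[(j-\ell)m-j,(j-\ell)m+j]$. Note that $L_j=-R_j$, so the two families are interchanged by negation; this symmetry lets me analyse only the $R_j$ and transfer the conclusion to the $L_j$.

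First I would establish the overlap step. Consecutive right intervals satisfy the property that $R_{j+1}$ sits to the left of $R_j$ (its centre is smaller by $m$), and the two meet or overlap exactly when the right endpoint $(\ell-j-1)m+(j+1)$ of $R_{j+1}$ is at least one less than the left endpoint $(\ell-j)m-j$ of $R_j$; this reduces to $2j+2\ge m$. Since $j\ge\lfloor m/2\rfloor$ forces $2j\ge m-1$, the inequality holds (indeed strictly) throughout the range $\lfloor m/2\rfloor\le j\le\ell-1$. Both endpoints of $R_j$ are strictly decreasing in $j$ because $m>1$, so the overlapping chain $R_{\lfloor m/2\rfloor},\dots,R_\ell$ telescopes to the single interval running from the left endpoint of $R_\ell$, namely $-\ell$, to the right endpoint of $R_{\lfloor m/2\rfloor}$. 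That is,
\[
\bigcup_{j=\lfloor m/2\rfloor}^{\ell}R_j=\left[-\ell,\ (\ell-\lfloor m/2\rfloor)m+\lfloor m/2\rfloor\right],
\qquad
\bigcup_{j=\lfloor m/2\rfloor}^{\ell}L_j=\left[-(\ell-\lfloor m/2\rfloor)m-\lfloor m/2\rfloor,\ \ell\right],
\]
the second identity following from the first by the negation symmetry $L_j=-R_j$.

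Next I would glue these two arcs. Both contain the central interval $R_\ell=L_\ell=[-\ell,\ell]$ (the right chain reaches down to $-\ell$ and the left chain up to $\ell$), so their union is the single symmetric interval $[-(\ell-\lfloor m/2\rfloor)m-\lfloor m/2\rfloor,\ (\ell-\lfloor m/2\rfloor)m+\lfloor m/2\rfloor]$, which is exactly the last interval in the claimed expression. The remaining terms of $N_\ell[0]$ are those with $0\le j\le\lfloor m/2\rfloor-1$, and these are copied verbatim from Lemma~\ref{neighbourhoods}, giving the first union in the statement. Reinstating the translation by $x$ and reducing modulo $n$ completes the identity.

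The main obstacle I anticipate is the floor bookkeeping around the threshold $j=\lfloor m/2\rfloor$, in particular confirming that the coalesced interval has radius exactly $(\ell-\lfloor m/2\rfloor)m+\lfloor m/2\rfloor$ rather than one more or one less. This is delicate because the cutoff $2j+2\ge m$ is met differently according to the parity of $m$: for odd $m$ the interval $R_{\lfloor m/2\rfloor-1}$ leaves a genuine one-vertex gap before the central block, whereas for even $m$ it is merely adjacent to it, so the stated decomposition is then non-disjoint but still a correct set equality. I would therefore check the boundary endpoints separately in the two parity cases to ensure no vertex is double-counted or omitted; since the statement asserts only an equality of sets, this overlap-versus-adjacency distinction does not affect its correctness.
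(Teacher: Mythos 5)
Your proposal is correct and follows essentially the same route as the paper: both start from Lemma~\ref{neighbourhoods} and show that the intervals indexed by $j \ge \left\lfloor m/2 \right\rfloor$ coalesce into the single central interval, via the same pairwise-overlap criterion. The only organizational difference is that the paper merges the left and right intervals simultaneously through an induction that grows a symmetric interval $[-tm-(\ell-t),\, tm+(\ell-t)]$, whereas you telescope the two one-sided chains separately and glue them along $[-\ell,\ell]$; this is a cosmetic variation, and your parity remarks about adjacency versus a one-vertex gap at the threshold $j=\left\lfloor m/2\right\rfloor-1$ are consistent with the stated set equality.
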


\begin{proof}
Without loss of generality, consider $N_\ell [0]$ for some $\ell > \frac m2$.  From Lemma \ref{neighbourhoods}, it suffices to show that  $\bigcup_{j=\left \lfloor \frac m2 \right\rfloor}^\ell ([ (j-\ell)m-j, (j-\ell)m+j] \cup [(\ell-j)m-j, (\ell-j)m+j] ) = \left [ - \left (\ell- \left \lfloor \frac m2 \right \rfloor \right )m - \left \lfloor \frac m2 \right \rfloor , \left (\ell- \left \lfloor \frac m2 \right \rfloor \right )m + \left \lfloor \frac m2 \right \rfloor \right ]$ when $\ell \ge \left \lfloor \frac m2 \right \rfloor +1$.

{\it Claim: For $t$ such that $1 \le t \le \ell - \left \lfloor \frac m2\right \rfloor$,
$$\bigcup_{j=\ell -t }^\ell \left ([ (j-\ell)m-j, (j-\ell)m+j] \cup [(\ell-j)m-j, (\ell-j)m+j] \right) =  [ -tm -(\ell - t),tm +(\ell -t)].$$}

\medskip

{\it Proof of Claim:}
Since $\ell > \frac m2$, it follows that  $\ell \ge m - \ell +1$ and
 \begin{eqnarray*}
& &  \bigcup_{j=\ell -1 }^\ell ([(j-\ell)m-j, (j-\ell)m+j] \cup [ (\ell-j)m-j, (\ell-j)m+j] ) \\
& = & [ -m-\ell + 1,-m + \ell -1]   \cup [m - \ell +1,  m+ \ell -1 ]  \cup  [-\ell, \ell] \\
 & = & [ -m-\ell + 1,  m+ \ell -1 ] 
\end{eqnarray*}
 Hence, the claim is true when $t=1$.
 
Assuming that the claim holds for $t=w$, where $1\le w \le \ell -\left \lfloor \frac m2 \right \rfloor -1$, it follows that when $t=w +1$
 \begin{eqnarray*}
& & \bigcup_{j=\ell - w -1 }^\ell ([(j-\ell)m-j, (j-\ell)m+j] \cup [ (\ell-j)m-j, (\ell-j)m+j] ) \\
& = &   [-(w+1)m- \ell + w+), -(w+1)m+\ell - w -1] \\
& & \cup \, [ (w+1)m- \ell +w+1, (w+1)m+ \ell -w-1] \\
& &  \cup \, [ -wm -(\ell- w),wm +(\ell -w)] \\
&= &  [-(w+1)m- \ell + w+1,  (w+1)m+ \ell -w-1] 
\end{eqnarray*}
since $wm +(\ell -w) \ge  (w+1)m- \ell +w+1$.   Therefore, by induction, the claim holds for all $t$ such that $1 \le t \le \ell - \left \lfloor \frac m2 \right \rfloor$. 

\medskip

By letting $t = \ell - \left \lfloor \frac m2 \right \rfloor$ in the statement of the claim,  the result follows.
\end{proof}
\bigskip

\begin{cor}\label{neighboursize2}
Let $G \cong C(n;1,m)$ for some even $m$ such that $m < n/2$.  For any $\ell$ such that $\ell > m/2$, $|N_\ell [i]| \le 2 \left ( \left \lfloor \frac m2 \right \rfloor \right )^2 + 2\ell m -2  \left \lfloor \frac m2 \right \rfloor m +2  \left \lfloor \frac m2 \right \rfloor +1$.
\end{cor}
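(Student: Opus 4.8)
The plan is to invoke Lemma \ref{neighbourhood1}, which already expresses $N_\ell[x]$ for $\ell > m/2$ as a union of $2\lfloor m/2 \rfloor$ short intervals together with one long central interval. Since the cardinality of a union never exceeds the sum of the cardinalities of its parts, I would bound $|N_\ell[x]|$ simply by adding up the lengths of these intervals; the inequality (rather than equality) in the statement is precisely what absorbs any overlap among the intervals when they fail to be disjoint, so no case analysis about which intervals actually coincide modulo $n$ is needed.

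Concretely, for each $j$ with $0 \le j \le \lfloor m/2 \rfloor - 1$ the two intervals $[x+(j-\ell)m-j,\, x+(j-\ell)m+j]$ and $[x+(\ell-j)m-j,\, x+(\ell-j)m+j]$ each contain $2j+1$ integers, contributing $2(2j+1)$ in total. Summing over $j$ and using $\sum_{j=0}^{k-1}(2j+1)=k^2$ with $k=\lfloor m/2\rfloor$ gives $\sum_{j=0}^{\lfloor m/2\rfloor-1} 2(2j+1) = 2\left(\lfloor m/2\rfloor\right)^2$. The remaining central interval $\left[x-(\ell-\lfloor m/2\rfloor)m-\lfloor m/2\rfloor,\, x+(\ell-\lfloor m/2\rfloor)m+\lfloor m/2\rfloor\right]$ has length $2(\ell-\lfloor m/2\rfloor)m + 2\lfloor m/2\rfloor + 1$.

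Adding these two contributions yields $2\left(\lfloor m/2\rfloor\right)^2 + 2(\ell-\lfloor m/2\rfloor)m + 2\lfloor m/2\rfloor + 1$, and expanding $2(\ell-\lfloor m/2\rfloor)m = 2\ell m - 2\lfloor m/2\rfloor m$ reproduces the claimed right-hand side exactly. There is no genuine obstacle here beyond careful bookkeeping: the key point is that subadditivity of cardinality, not disjointness, is what delivers the upper bound. This parallels the proof of Corollary \ref{neighboursize1}, with the single long interval supplied by Lemma \ref{neighbourhood1} playing the role of the collapsed terms $j=\lfloor m/2\rfloor,\ldots,\ell$ that merge once $\ell > m/2$.
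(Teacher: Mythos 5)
Your proof is correct and is essentially identical to the paper's: both apply Lemma \ref{neighbourhood1}, bound $|N_\ell[i]|$ by the sum $\sum_{j=0}^{\lfloor m/2\rfloor-1}2(2j+1)$ of the small interval sizes plus the length $2(\ell-\lfloor m/2\rfloor)m+2\lfloor m/2\rfloor+1$ of the central interval, and simplify. Your added remarks on subadditivity just make explicit what the paper leaves implicit.
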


\begin{proof}  For any $\ell > m/2$, it follows immediately from Lemma \ref{neighbourhood1} that
\begin{eqnarray*}
 |N_\ell [i]| & \le  &\left (\sum_{j=0}^{\left \lfloor \frac m2  \right \rfloor -1} 2(2j+1) \right ) + 2\ell m -2  \left \lfloor \frac m2 \right \rfloor m +2  \left \lfloor \frac m2 \right \rfloor +1 \\
 &=  &2 \left ( \left \lfloor \frac m2 \right \rfloor \right )^2 + 2\ell m -2  \left \lfloor \frac m2 \right \rfloor m +2  \left \lfloor \frac m2 \right \rfloor +1 \\ 
 \end{eqnarray*}
 \end{proof}

Now that we have the cardinality of each of the $\ell^{th}$ neighbourhoods, we work to find lower bounds on the burning number of $C(n;1,m)$.  In the following result, we consider two bounds, the latter of which utilizes the improved lower bound on $|N_{\ell}[i]|$ when $\ell > m/2$ given in  Corollary \ref{neighboursize2} .
\begin{thm}\label{cnmLB}
For any circulant graph $C(n;1,m)$ such that $2 \le m < \frac n2$ 
$$b(C(n;1,m)) \ge \left \lceil  \frac{ \left ( 162n+6 \sqrt{729n^2+6} \right) ^{2/3}-6}{\left ( 162n+6 \sqrt{729n^2+6} \right )^{1/3}} \right \rceil .$$

Futhermore, when $m$ also satisfies $ \frac{1}{12}m^3 + \frac 12 m^2 + \frac 76 m +1 < n$, 

$$b(C(n;1,m)) \ge \begin{cases}
\left \lceil \frac{ 3m^2 +\sqrt{-3m^4+12m^2+144mn+36}-6 }{12m} \right \rceil & :\,\mbox{m is even}\\ \\
\left \lceil \frac{ 3m^2 +\sqrt{-3m^4-6m^2+144mn+9}-3}{12m} \right \rceil & :\, \mbox{m is odd}
\end{cases} .
$$

\end{thm}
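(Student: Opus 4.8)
The plan is to exploit the elementary counting inequality satisfied by every burning sequence. If $(x_1,\ldots,x_k)$ is a burning sequence of $G=C(n;1,m)$, then $V(G)=\bigcup_{\ell=0}^{k-1}N_\ell[x_{k-\ell}]$, so that $n\le\sum_{\ell=0}^{k-1}\bigl|N_\ell[x_{k-\ell}]\bigr|$. Both displayed inequalities are obtained from this single estimate: one substitutes a cardinality bound for $|N_\ell[x]|$, evaluates the resulting sum in closed form, and then solves the polynomial inequality in $k$ that results. Because the right-hand sums are strictly increasing in $k$, the least admissible value of $k$ is the unique relevant root, and $b(G)$ is at least its ceiling.

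For the first bound I would apply Corollary~\ref{neighboursize1} to every term, so that $n\le\sum_{\ell=0}^{k-1}(2\ell^2+2\ell+1)$. Evaluating the sum gives $\sum_{\ell=0}^{k-1}(2\ell^2+2\ell+1)=\tfrac13(2k^3+k)$, hence $2k^3+k-3n\ge 0$. The cubic $f(k)=2k^3+k-3n$ has positive derivative $6k^2+1$, so it is strictly increasing and has a single real root; $f(k)\ge 0$ holds exactly when $k$ is at least that root. Writing the inequality in depressed form $k^3+\tfrac12 k-\tfrac32 n\ge 0$ and applying Cardano's formula to its boundary cubic produces the closed-form root displayed in the statement, and taking the ceiling yields the lower bound.

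For the sharper bound I would split the sum at $\lfloor m/2\rfloor$: for $0\le\ell\le\lfloor m/2\rfloor$ I use the estimate $2\ell^2+2\ell+1$ of Corollary~\ref{neighboursize1}, and for $\ell>m/2$ I use the estimate of Corollary~\ref{neighboursize2}, which depends only linearly on $\ell$. The tail therefore contributes a term of order $mk^2$ (summing $2m\ell$ gives $m(k-1)k$), while the head contributes a fixed polynomial in $m$, so the whole right-hand side collapses to a quadratic $mk^2+\beta k+\gamma$, where $\beta$ and $\gamma$ are explicit polynomials in $m$ whose exact form depends on the parity of $m$ through $\lfloor m/2\rfloor$. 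Imposing $n\le mk^2+\beta k+\gamma$ and solving by the quadratic formula isolates $k$; clearing denominators produces exactly the two parity-dependent expressions in the statement. The extra hypothesis $\tfrac1{12}m^3+\tfrac12 m^2+\tfrac76 m+1<n$ is, to leading order, the condition $n>\tfrac1{12}m^3$ under which the first (cubic) bound already forces $k>m/2$; this guarantees that the tail of the split sum is nonempty, so that Corollary~\ref{neighboursize2} is genuinely in force and the split is legitimate.

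The main obstacle is entirely computational: performing the split-sum evaluation and collecting terms into a clean quadratic. The two delicate points are the constant term $\gamma$, which must be computed separately for even and odd $m$ (for even $m$ it collapses neatly to $\tfrac1{12}m(m^2-4)$, with an analogous simplification for odd $m$), and verifying that the stated threshold is indeed the precise condition forcing $k>m/2$ so that the improved estimate applies. Once this bookkeeping is in hand, the quadratic formula delivers both parity cases and the cubic delivers the first bound with no further difficulty.
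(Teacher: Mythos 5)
Your proposal follows essentially the same route as the paper: the covering inequality $n\le\sum_{\ell=0}^{k-1}|N_\ell[x_{k-\ell}]|$, Corollary~\ref{neighboursize1} summed to $\tfrac{2}{3}k^3+\tfrac{1}{3}k$ and solved as a cubic for the first bound, and the split of the sum at $\lfloor m/2\rfloor$ using Corollary~\ref{neighboursize2} for the tail, collapsing to a parity-dependent quadratic in $k$, for the second. The paper likewise uses the hypothesis $\tfrac{1}{12}m^3+\tfrac12 m^2+\tfrac76 m+1<n$ together with the cubic bound to force $\tfrac m2<k-1$ so the tail is nonempty, exactly as you describe.
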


\begin{proof}  Suppose $(x_1, x_2, \ldots , x_k)$ is an optimal burning sequence in $G$, where $G \cong C(n;1,m)$ such that $2 \le m < n/2$.
By Corollay  \ref{neighboursize1},   it follows that 
\begin{eqnarray*}
n &\le &\sum_{i=0}^{k-1}|N_i[x_{k-i}]| \\
& \le &\sum_{i=0}^{k-1}  (2i^2 + 2i +1)\\
&= & \frac 23 k^3 + \frac 13 k  .
\end{eqnarray*}

Solving this inequality for $k$, we obtain
$$k \ge \frac{ \left ( 162n+6 \sqrt{729n^2+6} \right) ^{2/3}-6}{\left ( 162n+6 \sqrt{729n^2+6} \right )^{1/3}}.$$
 
Now, assume that in addition to  $2 \le m  < \frac n2$, we also have $ \frac{1}{12}m^3 + \frac 12 m^2 + \frac 76 m +1 < n$.   It follows that $\frac m2 < k-1$.  This can be shown by assuming to the contrary that $k \le \frac m2 + 1$, and using  $n \le \frac 23 k^3 + \frac 13 k $.  

Since  $\frac m2 < k-1$, we have $\left \lfloor \frac{m}{2} \right \rfloor \le k-2$, and can use Corollaries  \ref{neighboursize1} and \ref{neighboursize2} to obtain an improved upper bound on $n$: 

\begin{eqnarray*} \label{maineq}
n &\le &\sum_{i=0}^{\left \lfloor \frac{m}{2} \right \rfloor} |N_i[x_{k-i}]| + \sum_{i=\left \lfloor \frac{m}{2} \right \rfloor +1}^{k-1}|N_i[x_{k-i}]| \\
 & \le &\sum_{i=0}^{\left \lfloor \frac{m}{2} \right \rfloor} (2i^2 + 2i +1)  +   \sum_{i=\left \lfloor \frac{m}{2} \right \rfloor +1}^{k-1} \left (2 \left ( \left \lfloor \frac m2 \right \rfloor \right )^2 + 2\ell m -2  \left \lfloor \frac m2 \right \rfloor m +2  \left \lfloor \frac m2 \right \rfloor +1 \right ) .
\end{eqnarray*}

We now consider two cases based on the parity of $m$.  

\medskip

 \noindent {\it Case 1:  $m$ is even.}   The previous inequality can be simplified as follows:
 
\begin{eqnarray*}
n & \le &\sum_{i=0}^{m/2} (2i^2 + 2i +1) + \sum_{i=m/2+1}^{k-1} ( 2i m - \frac{m^2}{2} + m +1)\\
& \le & mk^2 +\left  (1 - \frac{m^2}{2} \right )k + \frac{m^3}{12} - \frac{m}{3} . \\
\end{eqnarray*}

We now solve this inequality for $k$ and obtain the following: $$k \geq \frac{3m^2+\sqrt{-3m^4+12m^2+144mn+36}-6}{12m} . $$

\bigskip

\noindent
{\it Case 2: $m$ is odd.}  Again, we simplify the previous inequality relating $n$ and $k$ to obtain

\begin{eqnarray*}
n & \le &\sum_{i=0}^{ \frac{m-1}{2}} (2i^2 + 2i +1)  +   \sum_{i= \frac{m+1}{2}}^{k-1} \left (2 \left (  \frac {m-1}{2} \right )^2 + 2\ell m -2  \left ( \frac {m-1}{2} \right ) m +2 \left (  \frac {m-1}{2}  \right )+1 \right ) \\
& \le & mk^2+\left (\frac 12-\frac{m^2}{2} \right )k+\left ( \frac{m^3}{12}-\frac{m}{12}  \right ) .
\end{eqnarray*}

As in the previous case, we solve the inequality for $k$ and obtain the following:
 
$$k \geq \frac{3m^2+\sqrt{-3m^4-6m^2+144mn+9}-3}{12m} . $$

Note that since $n > \frac{1}{12} m^3$, each bound on $k$ is a real value.
\end{proof}

With lower bounds on the burning number of $C(n;1,m)$ established, we now turn our attention to  upper bounds.   For these graphs, we consider a class of induced subgraphs that are either paths or cycles and use the fact that the burning number of a path or cycle on $q$ vertices is $ \left \lceil \sqrt q \right \rceil$ \cite{H2BG}.
\begin{thm}\label{easierupper}
Let $G \cong C(n;1,m)$ such that $4 \le m \le n/2$.  Then $b(G)   < \sqrt{\frac nm} + \frac m2 +1 $.

\end{thm}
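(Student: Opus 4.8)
The plan is to reduce the problem to burning a path or cycle. Set $t = \left\lceil \frac nm \right\rceil$ and let $S = \{0, m, 2m, \ldots, (t-1)m\}$, with all arithmetic modulo $n$. First I would check that these $t$ multiples of $m$ are pairwise distinct: writing $d = \gcd(n,m)$, $n = dn'$, $m = dm'$ with $\gcd(n',m')=1$, one has $jm \equiv j'm \pmod n$ iff $n' \mid (j'-j)$, and since $t \le n'$ no such coincidence occurs for indices in $\{0,\dots,t-1\}$. Consecutive elements of $S$ differ by $m$, which is in the distance set, so the edges $\{jm,(j+1)m\}$ (together with the wrap edge $\{(t-1)m,0\}$ when $m \mid n$) realize $S$ as a path, or as a cycle in the case $m \mid n$; call this subgraph $H$. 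Thus $H$ is a path or cycle on $t = \left\lceil \frac nm\right\rceil$ vertices, and by \cite{H2BG} its burning number is $\left\lceil \sqrt{t}\,\right\rceil$.

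The second ingredient is that $S$ is $\left\lfloor \frac m2\right\rfloor$-dominating in $G$. Any vertex $v$ can be written as $v = jm + r$ with $0 \le r < m$ (taking the last, possibly short, block of length at most $m$ into account near the wrap-around), and then the $\pm 1$ edges give $d_G(v,S) \le \min(r, m-r) \le \left\lfloor \frac m2\right\rfloor$. I would then transfer a burning sequence of $H$ into $G$. Since $H$ is a subgraph of $G$, we have $d_G \le d_H$ on $S$. Take an optimal burning sequence $(y_1,\dots,y_{q})$ of $H$ with $q = \left\lceil\sqrt t\,\right\rceil$, and in $G$ light $y_j$ at time $j$ for $1\le j\le q$, lighting arbitrary vertices at the remaining times, for a total length $k = q + \left\lfloor \frac m2\right\rfloor$. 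For any $v \in V(G)$ choose $w \in S$ with $d_G(v,w) \le \left\lfloor\frac m2\right\rfloor$; since $(y_1,\dots,y_q)$ burns $H$, there is $j$ with $d_H(w,y_j) \le q-j$, hence $d_G(w,y_j) \le q-j$, and therefore $d_G(v,y_j) \le (q-j) + \left\lfloor\frac m2\right\rfloor = k-j$, so $v$ is burned. This shows $b(G) \le \left\lceil \sqrt{t}\,\right\rceil + \left\lfloor \frac m2\right\rfloor$.

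Finally I would convert this into the stated estimate. Using the identity $\left\lceil \sqrt{\lceil x\rceil}\,\right\rceil = \left\lceil \sqrt{x}\,\right\rceil$ (valid since $(k-1)^2 < x \le k^2$ with $k=\lceil\sqrt x\rceil$ forces $(k-1)^2 < \lceil x\rceil \le k^2$), we get $\left\lceil \sqrt{t}\,\right\rceil = \left\lceil \sqrt{n/m}\,\right\rceil < \sqrt{\tfrac nm} + 1$, and combined with $\left\lfloor \frac m2\right\rfloor \le \frac m2$ this yields $b(G) < \sqrt{\tfrac nm} + \tfrac m2 + 1$, as required. I expect the main obstacle to be the bookkeeping that confirms $H$ is genuinely a simple path (or cycle) on exactly $\left\lceil \frac nm\right\rceil$ distinct vertices and that the domination bound $\left\lfloor \frac m2\right\rfloor$ is uniform, including the short final block near the wrap-around; the transfer argument and the closing inequality are then routine, with the nested ceiling–square-root identity being the one slightly delicate estimate needed to land exactly on $\sqrt{n/m}+m/2+1$.
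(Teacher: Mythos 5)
Your proof is correct, and it follows the same basic strategy as the paper --- burn the ``backbone'' of multiples of $m$ as a path or cycle, then let the $\pm 1$ edges carry the fire $\left\lfloor \frac m2\right\rfloor$ steps sideways --- but it differs in one genuinely useful way. The paper takes the backbone to be $\{0,m,\ldots,(q-1)m\}$ with $q=\lfloor n/m\rfloor$; after $\left\lceil\sqrt q\,\right\rceil+\left\lfloor\frac m2\right\rfloor$ steps this leaves an unburned arc $P^c$ of up to $r=n-qm$ vertices, which the paper handles by interleaving a \emph{second} burning sequence of length $\left\lceil\sqrt r\,\right\rceil$ on that arc; this cleanup is exactly where the hypothesis $m\ge 4$ is used, to guarantee $\sqrt r\le\left\lfloor\frac m2\right\rfloor$ so the second sequence fits in the remaining time steps. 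You instead take $t=\left\lceil n/m\right\rceil$ backbone vertices (i.e., you include $qm$ as well), which makes the backbone $\left\lfloor\frac m2\right\rfloor$-dominating even across the short wrap-around block of $r-1$ leftover vertices (each is within $\left\lfloor r/2\right\rfloor\le\left\lfloor\frac m2\right\rfloor$ of $qm$ or of $0$), so no cleanup phase is needed and the transfer is a single triangle-inequality argument. This is cleaner, it removes the only place the paper needs $m\ge 4$, and your nested-ceiling identity $\left\lceil\sqrt{\lceil x\rceil}\,\right\rceil=\left\lceil\sqrt x\,\right\rceil$ lands on the same final bound $\left\lceil\sqrt{n/m}\,\right\rceil+\left\lfloor\frac m2\right\rfloor<\sqrt{\frac nm}+\frac m2+1$. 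Two small points to tidy: when $r=1$ your ``path'' actually closes into a cycle (harmless, since cycles on $t$ vertices also burn in $\left\lceil\sqrt t\,\right\rceil$ steps, and extra edges of $G$ only shorten distances), and your distinctness check $t\le n/\gcd(n,m)$ is needed and correct as you state it.
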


\begin{proof}
We know that $n = qm +r$ for some $q \ge 2$ and $0 \le r \le m-1$.  
For each $i =0, -1, 1, -2, 2,  \ldots  ,- \left \lfloor \frac m2 \right \rfloor,  \left \lfloor \frac m2 \right \rfloor$, let $H_i$ be the subgraph of $G$ induced on the vertex set $\{i, i+m, \ldots , i + (q-1)m\}$.  
Note that if $r=0$, each $H_i$ is a cycle on $q$ vertices.  Otherwise, each $H_i$ is a path on $q$ vertices.  

We now show that there is burning sequence of length at most  $ \left \lceil \sqrt q \right \rceil +  \left \lfloor \frac m2 \right \rfloor$.
Let $S = (x_1, x_2, \ldots , x_t)$ be an optimal burning sequence in $H_0$.   Hence,  $t = \left \lceil \sqrt q \right \rceil$.     Let $S'$ be a burning sequence of $G$ such that $S'$ has $x_1, x_2, \ldots , x_t$ as its first $t$ vertices.   We will assume that $S'$ has length at least $t +  \left \lfloor \frac m2 \right \rfloor$.

We note that for each vertex $x \in V(H_0)$, both $x+i$ and $x-i$ are in $N_i[x]$. It follows that, using the burning sequence $S'$,  all the vertices in $H_i$ and $H_{-i}$ are burned after $t+i$ time steps.  Therefore, if we let $P$ denote the set of vertices that are burned after $t +  \left \lfloor \frac m2 \right \rfloor$ time steps only as a result of the fire propagating from the initial subsequence $S$, then 
\begin{eqnarray*}
P &= &
\bigcup_{i= 1} ^{\left \lfloor \frac m2 \right \rfloor } \left ( V(H_i ) \cup  V(H_{-i}) \right ) \\
& =&  \bigcup_{j= 0} ^{q-1 } \left [ jm - \left \lfloor \frac m2 \right \rfloor,  jm + \left \lfloor \frac m2 \right \rfloor \right ]  \\
& = &\left [ -  \left \lfloor \frac m2 \right \rfloor,  (q-1)m +  \left \lfloor \frac m2 \right \rfloor \right ] 
\end{eqnarray*}

Assuming that $P \neq V(G)$, it follows that $P^c =  \left [qm  -  \left \lceil \frac m2 \right \rceil +1 ,  qm + r - \left \lfloor \frac m2 \right \rfloor -1 \right ] $, and $|P^c| =  r+  \left \lceil \frac m2 \right \rceil - \left \lfloor \frac m2 \right \rfloor -1 \le r$.    Therefore, after $t +  \left \lfloor \frac m2 \right \rfloor$ time steps at most $r$ vertices are not burned as a result of the propagation of the  subsequence $S$.

The vertices of  $P^c$ induce a path in $G$.  This path has an optimal burning sequence $(y_1, y_2, \ldots y_k)$ such that $k = \sqrt {|P^c|} \le \sqrt r$.  Since $r \le m-1$ and $m \ge 4$, it follows that $ \sqrt r  \le \left \lfloor \frac m2\right \rfloor$.   It follows that there is a burning sequence of length  at most $t +  \left \lfloor \frac m2 \right \rfloor$ in which the sequence  $(x_1, \ldots , x_t, y_1, \ldots y_k)$  serves as the first $t+k$ vertices in the burning sequence.  Furthermore, the fire propogating from these vertices burns all the vertices of $G$, and any additional vertices in the burning sequence are redundant.  It follows that $b(G) \le t +  \left \lfloor \frac m2 \right \rfloor$.

It follows that $b(G) \le \left \lceil \sqrt{q} \right \rceil + \left \lfloor \frac m2 \right \rfloor \le  \left \lceil \sqrt{ \frac nm } \  \right \rceil + \left \lfloor \frac m2 \right \rfloor < \sqrt{\frac nm} + \frac m2 +1 $.
\end{proof}

We now consider graphs $C(n;1,m)$ such that $n = mq$ for some $q\ge 3$.  We note that the subgraph $H$ induced on the vertices $\{0, m, \ldots, qm\}$ is a isometric cycle in $G$.  It was shown in \cite{H2BG} that under certain conditions, the burning number of an isometric subgraph of $H$ is at most the burning number of $G$.    In this result, given below, $N_\ell [x]$ refers to the $\ell^{th}$ neighbourhood of $x$ in $G$, while  $N_\ell ^H  [x]$ refers to the $\ell^{th}$ neighbourhood of $x$ in the subgraph $H$.

\begin{thm}\cite{H2BG} \label{isometric}
Let $H$ be an isometric subgraph of a graph $G$ such that, for any node $x \in V(G)\setminus V(H)$, and any positive integer $r$, there exists a node $f_r(x) \in V(H)$ for which $N_r[x] \cap V(H) \subseteq N^H_r [f_r(x)]$. Then we have that $b(H)\le b(G)$.
\end{thm}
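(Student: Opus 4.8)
The plan is to take an optimal burning sequence of $G$ and convert it, one vertex at a time, into a burning sequence of $H$ of no greater length. Let $k = b(G)$ and let $(x_1, x_2, \ldots, x_k)$ be an optimal burning sequence of $G$, so that $\bigcup_{i=1}^{k} N_{k-i}[x_i] = V(G)$. Intersecting with $V(H)$ and distributing the intersection over the union yields $V(H) = \bigcup_{i=1}^{k} \left( N_{k-i}[x_i] \cap V(H) \right)$. The goal is to exhibit vertices $y_1, \ldots, y_k \in V(H)$ with $N_{k-i}[x_i] \cap V(H) \subseteq N^H_{k-i}[y_i]$ for each $i$; summing these containments then gives $\bigcup_{i=1}^{k} N^H_{k-i}[y_i] \supseteq V(H)$, so that $(y_1, \ldots, y_k)$ is a burning sequence of $H$ and hence $b(H) \le k = b(G)$.

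First I would define the replacements for the indices $1 \le i \le k-1$, where the radius $r = k-i$ is a positive integer. If $x_i \notin V(H)$, set $y_i = f_{k-i}(x_i)$, the vertex supplied by the hypothesis, so the required containment holds by assumption. If $x_i \in V(H)$, set $y_i = x_i$; the containment in this case is furnished by isometry, as I explain next.

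The role of the isometric hypothesis is precisely to dispose of the vertices $x_i$ already lying in $H$. For $x, z \in V(H)$ one always has $d_H(x,z) \ge d_G(x,z)$, since every path of $H$ is a path of $G$, and isometry upgrades this to equality. Consequently, for $x \in V(H)$ a vertex $z \in V(H)$ satisfies $d_G(x,z) \le r$ if and only if $d_H(x,z) \le r$, so $N_r[x] \cap V(H) = N^H_r[x]$, and the choice $y_i = x_i$ meets the containment with equality. Thus the hypothesis stated for $x \notin V(H)$ together with isometry for $x \in V(H)$ covers every index.

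Finally I would handle the terminal index $i = k$, where $r = 0$ and the hypothesis, stated only for positive $r$, does not apply directly. Here $N_0[x_k] \cap V(H) = \{x_k\} \cap V(H)$, which is either $\{x_k\}$ (when $x_k \in V(H)$, so set $y_k = x_k$) or empty (when $x_k \notin V(H)$, so $y_k$ may be taken to be any vertex of $H$, the term contributing nothing to the cover). With all $y_i$ selected, the union $\bigcup_{i=1}^{k} N^H_{k-i}[y_i]$ contains $V(H)$, finishing the proof. There is no serious obstacle: the substantive content is already absorbed into the hypothesis, and the only points requiring care are this $r = 0$ boundary case and the recognition that isometry, rather than a further assumption, is what handles the vertices already in $H$.
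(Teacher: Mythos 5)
Your argument is correct, and it is essentially the standard proof of this result: the paper itself states Theorem~\ref{isometric} only as a citation to \cite{H2BG} without reproducing a proof, and your replacement scheme ($y_i = f_{k-i}(x_i)$ for $x_i \notin V(H)$, $y_i = x_i$ with $N_r[x_i]\cap V(H) = N_r^H[x_i]$ by isometry otherwise, plus the trivial $r=0$ case) is exactly how the cited source argues. The only point you leave implicit is that a cover of $V(H)$ by closed neighbourhoods of radii $k-1,\dots,0$ suffices to conclude $b(H)\le k$ even though the $y_i$ need not be distinct or unburned when chosen; this equivalence is standard and is used throughout the paper, so no repair is needed.
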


\begin{cor}
For any $q >2$ and $m \ge 2$, $\left \lceil  \sqrt{q} \right \rceil \le b(C(mq; 1, m) )\le \left \lceil  \sqrt{q} \right \rceil + \left \lfloor \frac m2 \right \rfloor$.
\end{cor}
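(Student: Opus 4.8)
The plan is to prove the two inequalities separately: the upper bound follows almost immediately from the construction already developed for Theorem~\ref{easierupper}, while the lower bound is an application of Theorem~\ref{isometric} to the isometric cycle $H$.

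For the upper bound I would revisit the burning sequence built in the proof of Theorem~\ref{easierupper}, specialised to $n=mq$, i.e.\ to the case $r=0$. When $r=0$ every $H_i$ is a cycle on $q$ vertices, so I burn $H_0$ optimally in $t=\lceil\sqrt q\,\rceil$ steps and then let the fire spread for $\lfloor m/2\rfloor$ further steps. The set of reached vertices is $P=\bigcup_{j=0}^{q-1}[\,jm-\lfloor m/2\rfloor,\ jm+\lfloor m/2\rfloor\,]$; since consecutive centres are $m$ apart while each interval has radius $\lfloor m/2\rfloor$, these intervals cover all of $\mathbb Z_{mq}$ (they tile exactly when $m$ is odd and overlap by one when $m$ is even). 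Hence $P=V(G)$ and $b(G)\le\lceil\sqrt q\,\rceil+\lfloor m/2\rfloor$. The one point to flag is that this argument needs only $m\ge2$: the hypothesis $m\ge4$ in Theorem~\ref{easierupper} was used solely to bound the leftover path arising when $r>0$, which is absent here, so the bound is valid for all $m\ge2$ and $q>2$.

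For the lower bound I would apply Theorem~\ref{isometric} with $H$ the subgraph induced on $\{0,m,2m,\ldots,(q-1)m\}$, a cycle $C_q$ with $b(H)=\lceil\sqrt q\,\rceil$. Two hypotheses must be checked. First, that $H$ is isometric: writing a walk from $0$ to a multiple $tm$ with $p$ net $m$-steps and $w$ net $1$-steps, the relation $pm+w\equiv tm\pmod{mq}$ forces $w$ to be a multiple of $m$; the case $w=0$ gives length at least the circular distance $d_H(0,tm)$, and any $w\neq0$ gives length at least $m+(d_H(0,tm)-1)\ge d_H(0,tm)$, so $d_G=d_H$ on $V(H)$. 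Second, for $x\notin V(H)$, write $x=jm+s$ with $1\le s\le m-1$ and take $f_r(x)=jm$. Using the description of $N_r[x]$ in Lemma~\ref{neighbourhoods}, I would show that every multiple of $m$ lying in $N_r[x]$ has the form $(j+t)m$ for some representative $t$ with $|t|\le r$; then $d_H(jm,(j+t)m)\le|t|\le r$, giving $N_r[x]\cap V(H)\subseteq N_r^H[jm]$. With both hypotheses met, Theorem~\ref{isometric} yields $\lceil\sqrt q\,\rceil=b(H)\le b(G)$.

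The main obstacle is this second verification: extracting from Lemma~\ref{neighbourhoods} precisely which multiples of $m$ occur in each interval $[x+(i-r)m-i,\,x+(i-r)m+i]$ and $[x+(r-i)m-i,\,x+(r-i)m+i]$, and confirming each is within $H$-distance $r$ of the single anchor $jm$. The delicate point is that a multiple of $m$ may be reached from $x$ by rounding down (cost $s$) or rounding up (cost $m-s$); I would handle both by checking that in either case the corresponding index $t$ satisfies $|t|\le r$, with the bound $s\le m-1$ being exactly what makes the round-up case close. Some care is needed with the modular wrap-around, but this only helps, since it can shorten $d_H$ while leaving the membership in $N_r[x]$ unchanged, so the desired inclusion is preserved.
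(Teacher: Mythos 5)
Your proposal is correct and follows essentially the same route as the paper: the lower bound via Theorem~\ref{isometric} applied to the isometric cycle on $\{0,m,\ldots,(q-1)m\}$ with the anchor $f_r(x)$ taken to be the nearest multiple of $m$, and the upper bound by specialising the covering construction of Theorem~\ref{easierupper} to the case $r=0$. Your explicit observation that the hypothesis $m\ge 4$ in Theorem~\ref{easierupper} is only needed to handle the leftover path when $r>0$, and so can be dropped when $n=mq$, is a point the paper's proof leaves implicit, and your sketch of why the cycle is isometric fills in a claim the paper merely asserts.
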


\begin{proof}
Let $G \cong b(C(mq; 1, m)$ for some $q \ge 3$ and $m \ge 2$.  The subgraph of $G$ induced on the set $\{0, m, 2m, \ldots , (q-1)m\}$ is a cycle of length $q$.  Furthermore, this cycle is an isometric subgraph of $G$. Call this subgraph $H$.  

Consider a vertex from the set $V(G) \setminus V(H)$.  Without loss of generality, we may assume that $0 < x \le \frac m2$.  From Lemma \ref{neighbourhoods}, we know that whenever $ \ell \ge 0$, 
 $N_\ell[x] = \bigcup_{j=0}^\ell ([x+ (j-\ell)m-j, x+(j-\ell)m+j] \cup [x+ (\ell-j)m-j, x+(\ell-j)m+j] )$.  We note that $x+ (j-\ell)m-j$ and $x+ (\ell-j)m-j$ are minimized when $j=0$ and $j=\ell$, respectively.  Similarly, $x+(j-\ell)m+j$ and $x+(\ell-j)m+j$ are maximized at $j=\ell$ and $j=0$, respectively.  It follows that $N_\ell[x] \subseteq [-\ell m + x, x+ \ell] \cup [x- \ell, x+\ell m] $.  Therefore, $N_\ell[x] \subseteq \{jm: -\ell +1 \le j \le \ell\}$.  Hence, $N_\ell[x] \subseteq N_\ell ^H [0]$.

By letting $f_r(x) = 0$ for each positive integer $r$, we satisfy the requirement in Theorem \ref{isometric}, and conclude that  $b(H) \le b(G)$.  Since $H$ is a cycle on $q$ vertices, $b(H) = \lceil \sqrt q \rceil$. This, together with Theorem \ref{easierupper}, gives the desired result. \end{proof}

\begin{thm}\label{order}
Taking $m$ to be constant, $b(C(n;1,m)$ is on the order of $\sqrt\frac nm$.
\end{thm}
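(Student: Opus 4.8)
The plan is to sandwich $b(C(n;1,m))$ between quantities that are each asymptotic to $\sqrt{n/m}$, relying entirely on the bounds already established. Throughout, $m$ is treated as a fixed constant while $n \to \infty$. First I would dispatch the upper bound for $m \ge 4$ using Theorem \ref{easierupper}, which gives $b(C(n;1,m)) < \sqrt{n/m} + \frac{m}{2} + 1$. Dividing through by $\sqrt{n/m}$ yields
$$\frac{b(C(n;1,m))}{\sqrt{n/m}} < 1 + \frac{\frac{m}{2}+1}{\sqrt{n/m}},$$
and since $m$ is constant the quotient on the right tends to $0$ as $n\to\infty$. Hence $\limsup_{n\to\infty} b(C(n;1,m))/\sqrt{n/m} \le 1$.

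Next I would handle the lower bound. For all sufficiently large $n$ (relative to the fixed $m$), the hypothesis $\frac{1}{12}m^3 + \frac12 m^2 + \frac76 m + 1 < n$ of the improved bound in Theorem \ref{cnmLB} is eventually satisfied, so for even $m$ we may use
$$b(C(n;1,m)) \ge \frac{3m^2 + \sqrt{-3m^4 + 12m^2 + 144mn + 36} - 6}{12m}.$$
The key step is to extract the leading-order behaviour of the radical: factoring out the linear-in-$n$ term $144mn$ gives $\sqrt{-3m^4 + 12m^2 + 144mn + 36} = 12\sqrt{mn}\,(1 + o(1))$, since every other summand under the root is $O(1)$ in $n$. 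The right-hand side therefore equals $\frac{12\sqrt{mn}}{12m}\,(1+o(1)) + O(1/\sqrt n) = \sqrt{n/m}\,(1 + o(1))$, and dividing by $\sqrt{n/m}$ gives $\liminf_{n\to\infty} b(C(n;1,m))/\sqrt{n/m} \ge 1$. The odd-$m$ case is identical, using the second branch of Theorem \ref{cnmLB}, whose radical $\sqrt{-3m^4 - 6m^2 + 144mn + 9}$ has the same leading term $12\sqrt{mn}$.

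Combining the two estimates gives $\lim_{n\to\infty} b(C(n;1,m))/\sqrt{n/m} = 1$ for every fixed $m \ge 4$, so $b(C(n;1,m))$ is on the order of $\sqrt{n/m}$. For the two remaining constant values $m = 2$ and $m = 3$, for which Theorem \ref{easierupper} does not apply, the lower bound above still holds, and the same asymptotic order is obtained (indeed more precisely) from the exact burning numbers determined in the following section; I would simply remark on this rather than reprove it here.

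I do not anticipate a serious obstacle, as both bounds are already in hand. The only genuine work is confirming that the linear term $144mn$ dominates the radical in the lower bound, so that its prefactor $\frac{1}{12m}$ produces exactly $\sqrt{n/m}$ in the limit; the additive constants $3m^2$ and $-6$ (and their odd-$m$ analogues) are $O(1)$ and hence negligible against $\sqrt{n/m}$. One minor point worth stating carefully is that all of these conclusions are \emph{asymptotic} in $n$ with $m$ fixed, so the constant contributions $\frac{m}{2}+1$ and $\frac{3m^2-6}{12m}$ are absorbed into the $o(1)$ terms precisely because they do not grow with $n$.
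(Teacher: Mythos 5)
Your proposal is correct and follows essentially the same route as the paper: sandwich $b(C(n;1,m))$ between the lower bound of Theorem \ref{cnmLB} and the upper bound of Theorem \ref{easierupper}, and check that both, divided by $\sqrt{n/m}$, tend to $1$ as $n\to\infty$ with $m$ fixed. You are in fact slightly more careful than the paper in noting that Theorem \ref{easierupper} requires $m\ge 4$ and handling $m=2,3$ via the exact values of Section 4, a point the paper's proof glosses over.
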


\begin{proof}
Let $G = (n;1,m)$ where $n> 1+ \frac{m^3}{12}+\frac{m^2}{2} + \frac{7m}{6}$.  Let $$f(n)  =
 \begin{cases}
\left \lceil \frac{ 3m^2 +\sqrt{-3m^4+12m^2+144mn+36}-6 }{12m} \right \rceil & :\,\mbox{m is even}\\ \\
\left \lceil \frac{ 3m^2 +\sqrt{-3m^4-6m^2+144mn+9}-3}{12m} \right \rceil & :\, \mbox{m is odd}
\end{cases}
$$ and $g(n) = \sqrt{\frac nm} + \frac m2 +1 $.
By Theorems \ref{cnmLB} and \ref{easierupper}, $f(n)$  and $g(n)$ are  lower and upper bounds on $b(G)$, respectively.  It follows that $$  \lim_{n \rightarrow \infty} \frac{g(n)}{\sqrt\frac nm} \le  \lim_{n \rightarrow \infty}\frac{b(C(n;1,m)}{\sqrt\frac nm} \le  \lim_{n \rightarrow \infty} \frac{f(n)}{\sqrt\frac nm} $$ and $$ 1 \le  \lim_{n \rightarrow \infty}\frac{b(G)}{\sqrt\frac nm} \le1.$$
Therefore, $b(C(n;1,m)$ and $\sqrt{\frac nm}$ are asymptotically equal, and the result follows.
\end{proof}

\section{Burning Numbers of $C(n;1,2)$ and $C(n;1,3)$}

We now use the lower bounds presented in Theorem \ref{cnmLB} to find the burning numbers of circulant graphs $C(n;1,2)$ and $C(n;1,3)$.

\begin{thm}\label{circ12}
For any $n\ge5$,
 $b(C(n;1,2))= \left \lceil\frac{1+\sqrt{1+8n}}{4}\right \rceil$.
\end{thm}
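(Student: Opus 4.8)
The plan is to mirror the structure of Theorem \ref{half}: establish a lower bound from the cardinalities of the $\ell^{th}$ neighbourhoods, then exhibit a matching burning sequence. The first step is to specialise the neighbourhood descriptions to $m=2$. Since $\lfloor m/2 \rfloor = 1$, the union in Lemma \ref{neighbourhood1} collapses: for every $\ell \ge 1$ the two singletons $\{x-2\ell\}$ and $\{x+2\ell\}$ coming from the $j=0$ term join with the central interval $[x-2\ell+1, x+2\ell-1]$ to give $N_\ell[x] = [x-2\ell, x+2\ell]$, a single interval of at most $4\ell+1$ vertices. (Equivalently, in $C(n;1,2)$ the distance from $x$ to $y$ on the cycle is $\lceil |x-y|/2 \rceil$.) This is precisely the value $4\ell+1$ produced by Corollary \ref{neighboursize2} at $m=2$.

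For the lower bound, I would let $k = b(G)$ with optimal burning sequence $(x_1,\dots,x_k)$ and sum the neighbourhood sizes:
$$n \le \sum_{i=0}^{k-1} |N_i[x_{k-i}]| \le \sum_{i=0}^{k-1}(4i+1) = 2k^2 - k.$$
Solving $2k^2 - k - n \ge 0$ for its positive root yields $k \ge \frac{1+\sqrt{1+8n}}{4}$, hence $b(G) \ge \left\lceil \frac{1+\sqrt{1+8n}}{4}\right\rceil$. This coincides with the even-$m$ bound of Theorem \ref{cnmLB} specialised to $m=2$ (there $\sqrt{288n+36}=6\sqrt{8n+1}$), and because the argument above is direct it also covers the two small cases $n=5,6$ that fall outside the hypothesis $\frac{1}{12}m^3+\frac12 m^2+\frac76 m + 1 < n$ of that theorem.

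For the upper bound, I would set $k = \left\lceil \frac{1+\sqrt{1+8n}}{4}\right\rceil$, so that $2k^2 - k \ge n$, and build a burning sequence by laying the $k$ intervals $N_{k-i}[x_i] = [x_i - 2(k-i), x_i + 2(k-i)]$, of lengths $4(k-1)+1, 4(k-2)+1, \dots, 1$, consecutively along $\mathbb{Z}$ starting at $0$. Concretely, choosing centres $x_i \equiv -2i^2 + (4k+1)i - 2k - 1 \pmod n$ makes the left endpoint of the $i^{th}$ interval equal to one more than the right endpoint of the $(i-1)^{th}$, so the intervals tile $[0, 2k^2 - k - 1]$ with no gaps. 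Since $2k^2 - k \ge n$, this block has length at least $n$, and its reduction modulo $n$ is all of $\mathbb{Z}_n$; thus $\bigcup_{i=1}^k N_{k-i}[x_i] = V(G)$ and $b(G) \le k$. Combining the two bounds gives the stated equality.

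The routine parts — the neighbourhood collapse and the quadratic manipulation confirming that $k = \left\lceil \frac{1+\sqrt{1+8n}}{4}\right\rceil$ satisfies $2k^2 - k \ge n$ — are straightforward. The step requiring the most care is the tiling in the upper bound: one must verify that consecutive centred intervals of the prescribed odd lengths abut exactly, and that wrapping the length-$(2k^2-k)$ block around the $n$-cycle leaves nothing uncovered. A minor point is that this wraparound can make a late-selected vertex coincide with an already-burned one (for instance $n=10$, $k=3$, where $x_1$ and $x_3$ collide); this is harmless and permitted under the paper's convention, since the defining requirement is only that the closed neighbourhoods cover $V(G)$, exactly as in the $K_n$ discussion.
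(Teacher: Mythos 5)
Your proposal is correct and follows essentially the same strategy as the paper's proof: a lower bound of $\left\lceil\frac{1+\sqrt{1+8n}}{4}\right\rceil$ obtained by summing the neighbourhood cardinalities $|N_i[x_{k-i}]|\le 4i+1$ (the paper gets this by citing Theorem \ref{cnmLB}, whose proof is the same computation), and an upper bound obtained by choosing sources whose closed neighbourhoods are exactly abutting intervals tiling a block of length $2k^2-k\ge n$ (the paper's centres $x_{k-i}=2i^2+i$ give the same tiling as your reparametrised formula, and your endpoint calculations check out). Your explicit remark about possible source collisions modulo $n$ is a point the paper glosses over, but it is handled correctly and does not change the argument.
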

\begin{proof}

Let $G \cong C(n;1,2)$, where $n \ge 5$, and let $k=\left \lceil\frac{1+\sqrt{1+8n}}{4}\right \rceil$.
By Theorem \ref{cnmLB}, $b(C(n;1,2) \ge  \left \lceil\frac{1+\sqrt{1+8n}}{4}\right \rceil$.  It therefore suffices to show that there is sequence of vertices $(x_1, x_2, \ldots , x_k)$ that serves as a burning sequence for $G$.

We now define $x_{k-i}$ for each $i \in \{0, \ldots k-1\}$.  First, when $0 \le i \le {k-1}$, we let $x_{k-i}=2i^2+i$.  We note that for any vertex $x$ in $G$, $N_i [x]=[x-2i,x+2i]$. Since $x_{k-i} =2i^2+i$ for $0\le i \le k-2$, it follows that $N_i [x_{k-i} ]=[2i^2-i,2i^2+3i]$ for $0\le i \le k-2$. 

We claim that for all $m=0,2,\dots,k-1$, 
$$[0,2m^2 +3m]\subseteq N_{0} [x_k]\cup N_1 [x_{k-1}]\cup \cdots \cup N_{m} [x_{k-m}]$$

We now proceed by induction.
Note that $N_0 [x_k]=N_0[0]=\{0\}=[0,2(0)^2+3(0)]$, and assume $N_{0} [x_k]\cup N_1 [x_{k-1}]\cup \cdots \cup N_{m} [x_{k-m}]\supseteq [0,2m^2 +3m]$ for some $m$ such that $0\le m\le k-2$. 

Since  $N_{m+1}[x_{k-(m+1)}]=[2(m+1)^{2}-(m+1),2(m+1)^{2}+3(m+1)]=[2m^{2}+3m+1,2(m+1)^{2}+3(m+1)]$,
it follows that $N_{0}[x_{k}]\cup N_{1}[x_{k-1}]\cup\cdots\cup N_{m}[x_{k-m}]\cup N_{m+1}[x_{k-(m+1)}]  \supseteq [0,2(m+1)^{2}+3(m+1)].$  

Therefore, $N_0[x_k]\cup N_1[x_{k-1}]\cup \dots \cup N_{k-1}[x_1]\supseteq [0,2(k-1)^2+3(k-1)]=[0,(2k+1)(k-1)]$.
 
 Since $ k \ge \frac{1+\sqrt{1+8n}}{4}$,
  \begin{eqnarray*}  
  &&(2k+1)(k-1)\\
 &\ge& \left ( \frac{1+\sqrt{1+8n}}{2} +1 \right)  \left ( \frac{1+\sqrt{1+8n}}{4} -1 \right)\\
 &=&n-1
\end{eqnarray*}
Therefore, $N_{k-1}[x_1]\supseteq [2k^2-5k+3,n-1]$, and $N_0[x_k]\cup N_1[x_{k-1}]\cup \dots \cup N_{k-1}[x_1] =
V(G). $

Since  $b(C(n;1,2))\ge \left \lceil\frac{1+\sqrt{1+8n}}{4}\right \rceil$, it follows that $(x_1, x_2, \ldots , x_k)$ is an optimal burning sequence and  $b(C(n;1,2))=\left \lceil\frac{1+\sqrt{1+8n}}{4}\right \rceil$.
\end{proof}

\begin{thm}\label{circ13}
For any $n\ge 7$,   $b(C(n;1,3)) = \left \lfloor \frac{2+\sqrt{3n-2}}{3} \right \rfloor +1.$  \end{thm}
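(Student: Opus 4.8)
The plan is to follow the template of Theorem \ref{circ12}: derive a counting lower bound from the sizes of the neighbourhoods, then exhibit an explicit burning sequence of matching length. The new feature, compared with the $m=2$ case, is that for $m=3$ the neighbourhoods are not intervals, and this is what forces a correction of $+1$ at certain values of $n$. First I would record the shape of the neighbourhoods: by Lemmas \ref{neighbourhoods} and \ref{neighbourhood1} with $m=3$, for $\ell\ge1$ the set $N_\ell[x]$ is a \emph{dumbbell} consisting of the central interval $[x-(3\ell-2),x+(3\ell-2)]$ together with the two isolated vertices $x\pm 3\ell$, leaving \emph{holes} at $x\pm(3\ell-1)$; hence $|N_\ell[x]|=6\ell-1$. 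Summing, a burning sequence of length $k$ must satisfy $n\le 1+\sum_{\ell=1}^{k-1}(6\ell-1)=3k^2-4k+2$, which is exactly the bound produced by the odd case of Theorem \ref{cnmLB} with $m=3$; solving for $k$ gives $b\ge\lceil(2+\sqrt{3n-2})/3\rceil$.

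Now $\lfloor(2+\sqrt{3n-2})/3\rfloor+1$ equals this ceiling unless $(2+\sqrt{3n-2})/3\in\mathbb Z$, in which case it is one larger, and this happens precisely at $n=3k^2-4k+2$. At such $n$ the counting bound is tight, so a length-$k$ burning sequence would have to cover $\mathbb Z_n$ by $k$ pairwise disjoint neighbourhoods each of maximum size, i.e.\ a perfect tiling by the $k-1$ dumbbells (radii $1,\dots,k-1$) and one singleton. I would rule this out structurally: using vertex transitivity, fix the centre of the radius-$(k-1)$ piece and track how the $2(k-1)$ holes must be filled by the isolated vertices and central intervals of the remaining pieces. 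Because each piece contributes exactly two isolated vertices and two holes, and a hole sits immediately inside its own isolated vertex, the meshing is rigid, and I expect a forced overlap (a contradiction) either by induction on $k$ or by a direct end-of-arc analysis, as already happens in the smallest case $n=17$, $k=3$. This yields $b\ge k+1=\lfloor(2+\sqrt{3n-2})/3\rfloor+1$ at the exceptional values and the generic bound elsewhere; equivalently, every length-$k$ covering forces $n\le 3k^2-4k+1$.

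For the upper bound I would, as in Theorem \ref{circ12}, give explicit centres $x_{k-i}$ by a closed quadratic formula in $i$, chosen so that consecutive dumbbells chain into one contiguous arc: each piece's trailing isolated vertex fills the leading hole of the next piece, so that essentially no coverage is wasted beyond the single unit of slack available. An induction on $i$ (mirroring the interval induction of the $m=2$ proof, but now bookkeeping the two holes of each dumbbell) would show that $\bigcup_i N_i[x_{k-i}]$ is all of $\mathbb Z_n$ whenever $n\le 3k^2-4k+1$, hence for $k=\lfloor(2+\sqrt{3n-2})/3\rfloor+1$. Combined with the lower bound, this gives the claimed value; the small cases $7\le n\le16$, where the sharpened hypothesis of Theorem \ref{cnmLB} may fail, can be checked directly (there $k=2$ already fails, since $|N_0[x]|+|N_1[x]|=6<n$).

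The hard part will be the tiling-impossibility step. The natural algebraic invariant, summing a primitive cube root of unity $\omega^{v}$ over each piece, vanishes on every central interval and so detects only the isolated vertices, giving a clean obstruction; but it is well defined on $\mathbb Z_n$ only when $3\mid n$, whereas the exceptional values $n=3k^2-4k+2$ are congruent to $2-k\pmod 3$ and so are frequently coprime to $3$. Thus a uniform, genuinely combinatorial argument about how the holes and isolated vertices interlock in the cyclic order seems unavoidable, and this is where I expect the main difficulty to lie; the hole-bookkeeping in the explicit construction is the secondary technical point.
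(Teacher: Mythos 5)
Your overall strategy is the same as the paper's: the counting bound $n\le 1+\sum_{\ell=1}^{k-1}(6\ell-1)=3k^2-4k+2$ giving $b\ge\lceil(2+\sqrt{3n-2})/3\rceil$, the observation that the claimed value exceeds this ceiling exactly when $n=3k^2-4k+2$ (so that equality would force a partition of $\mathbb{Z}_n$ into pairwise disjoint, maximum-size neighbourhoods), and an explicit quadratic-in-$i$ choice of centres for the upper bound (the paper takes $x_{k-i}=3i^2-i$, chaining the dumbbells exactly as you describe, with one unit of slack). Your description of the $m=3$ neighbourhoods as a central interval $[x-3\ell+2,x+3\ell-2]$ plus isolated vertices $x\pm3\ell$ and holes at $x\pm(3\ell-1)$ is also correct and is exactly what the paper uses.

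The genuine gap is the step you yourself flag as the hard part: you never actually prove that the perfect tiling at $n=3k^2-4k+2$ is impossible, only that you ``expect a forced overlap'' from the rigidity of the hole/isolated-vertex meshing, possibly by induction on $k$ or a global invariant. No global argument is needed, and the roots-of-unity idea is a dead end for the reason you give. The paper's argument is purely local and short: by vertex transitivity take $x_k=0$, so $N_0[x_k]=\{0\}$ is one tile. Some tile $N_\ell[x]$ with $\ell\ge1$ contains $1$ but not $0$; since the central interval is contiguous, $1$ must be an extreme point of the tile, forcing $x\in\{1+3\ell,\,1-3\ell,\,-1+3\ell\}$, and disjointness from $\{0\}$ eliminates $x=-1+3\ell$ (which would put $0=x-3\ell+1$... more precisely, the paper checks $0\notin N_\ell[x]$ directly in each case). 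One then asks which tile covers $2$, and in each surviving case which tile covers $3$, and in every branch two tiles are forced to share a vertex. This is a finite case analysis anchored at the singleton, not a global parity or counting argument, and without it (or some completed substitute) your lower bound at the exceptional values $n=3k^2-4k+2$ --- the first being $n=17$, where the formula claims $b=4$ while the counting bound only gives $b\ge3$ --- is unproven, so the theorem is not established. The upper-bound half of your plan is sound but likewise needs the explicit formula and the two-holes bookkeeping written out; the paper's induction shows $\bigcup_{i=0}^{m}N_i[x_{k-i}]=[-1,3m^2+2m-2]\cup\{3m^2+2m\}$, which is the invariant your sketch is implicitly aiming for.
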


\begin{proof}
Let $G \equiv C(n;1,3)$, where $n \ge 7$. We begin by proving that $b(C(n;1,3)) \ge \left \lfloor \frac{2+\sqrt{3n-2}}{3} \right \rfloor +1.$   Using the first lower bound  given in Theorem \ref{cnmLB}, we obtain $b(C(n;1,m)) \ge 3$ when $7 \le n \le 11$.  When $n \ge 12$, we use the odd case for the second lower bound given in Theorem \ref{cnmLB}.  This gives $b(C(n;1,3)) \ge \left \lceil \frac{2+\sqrt{3n-2}}{3} \right \rceil$.  This generalizes to  $b(C(n;1,3)) \ge \left \lceil \frac{2+\sqrt{3n-2}}{3} \right \rceil$ for all $n \ge 7$.  We note that whenever $\frac{2+\sqrt{3n-2}}{3}$ is non-integer, $\left \lceil \frac{2+\sqrt{3n-2}}{3} \right \rceil =\left \lfloor \frac{2+\sqrt{3n-2}}{3} \right \rfloor +1$.  Therefore, it suffices to show that  $b(C(n;1,3)) >  \frac{2+\sqrt{3n-2}}{3} $ for all $n \ge 7$.

Suppose this is not the case.  Suppose  $b(C(n;1,3)) = \frac{2+\sqrt{3n-2}}{3} $ for some $n \ge 7$.  It follows from the proof of Theorem \ref{cnmLB} that there must be an optimal burning sequence   $(x_{1},x_{2,}.....x_{k})$ such that the elements in $\{N_{k-i}[x_i] \, | \, 1 \le i \le k\}$ are  pair-wise disjoint.  

Without loss of generality, assume that $x_k=0$.  It follows that for some $x$ in the optimal burning sequence, where $x=x_{k - \ell}$ and $\ell \ge 1$, $1 \in N_\ell[x]$, but $0 \not \in N_\ell[x]$.  By Lemma \ref{neighbourhood1}, $N_\ell[x] = \{x-3\ell, x+3\ell\} \cup [x-3\ell+2, x+3\ell-2]$.  Hence, either $1 = x - 3\ell$, $1 = x+3 \ell$, or $1=x-3\ell +2$.  In other words, $x = 1 + 3\ell$, $x = 1-3 \ell$, or $x=-1+3\ell $.  Without loss of generality, we consider the two cases $x = 1 + 4\ell$ and $x=1-3\ell$.

{\it Case 1:} $x = 1 + 3\ell$.  Then $N_\ell[x] = \{1,1+6\ell\} \cup [3,6\ell -1]$.  It follows that $2 \not \in N_\ell[s]$.  Otherwise, we would have $1+6\ell = 2$ and $6\ell -1 =0$, which contradicts the fact that $0 \not \in N_\ell[x]$.  It follows that there must be some vertex $y$ in the optimal burning sequence such that $y = x_{k-\ell'}$ and  $2 \in N_{\ell '}[y]$.  It follows that $0,1 \not \in N_{\ell '}[y]$.  
As with $x$, this gives us three possible values for $y$: $y=2+3 \ell '$, $y = 2-3\ell'$, and $y = 3\ell'$.  Since $0 \not \in N_{\ell '}[y]$, it follows that $y = 2+3 \ell '$.  However, this means $N_{\ell '}[y] = \{2\} \cup [4,6 \ell]$.  This implies $4$ is in both $N_{\ell }[x]$ and $N_{\ell '}[y]$, which is a contradiction.
 
{\it Case 2:}  $x = 1 - 3\ell$.  Then $N_\ell[x] = \{1-6\ell,1 \} \cup [3-6\ell, -1]$.  We again consider a vertex $y$ such that $y = x_{k-\ell'}$ and  $2 \in N_{\ell '}[y]$.  Of the three possible values for $y$ only $y=2+3 \ell '$ gives   $N_{\ell '}[y] \cap N_\ell[x] = \emptyset$.  This means $N_{\ell '}[y] = \{2, 2+6\ell'\} \cup [4, 6 \ell ']$.  It follows that $3$ is in  neither $N_\ell[x]$ nor $N_{\ell'}[y]$, and there must be some vertex $z$ and  positive integer $\ell ''$ such that $3 \in N_{\ell''}[z]$.  However, it is impossible to choose such a vertex $z$ and integer $\ell''$ such that $N_{\ell''}[z]$ is disjoint from both $N_\ell[x]$ and $N_{\ell'}[y]$.  

It follows that there is no optimal burning sequence that results in a set of disjoint neighbourhoods.  Therefore, $b(C(n;1,3)) >  \frac{2+\sqrt{3n-2}}{3}$, and  $b(C(n;1,3)) \ge \left \lfloor \frac{2+\sqrt{3n-2}}{3} \right \rfloor +1$.

\medskip

Next, we assume $k=\left \lfloor \frac{2+\sqrt{3n-2}}{3} \right \rfloor+1$. Then for $0\le i \le {k-1}$, we choose $x_{k-i}=3i^2-i$. Again, it suffices to show that 
$$N_0 [x_k]\cup N_1 [x_{k-1}]\cup \cdots \cup N_{k-1} [x_1]= [0,n-1]$$

Since $N_i[x_{k-i}]=[x_{k-i}-3i+2,x_{k-i}+3i-2]\cup\{x_{k-i}-3i\}\cup\{x_{k-i}+3i\}$ and $x_{k-i}=3i^2-i$, we have that 
$$N_i[x_{k-i}]=[3i^2-4i+2, 3i^2+2i-2]\cup\{3i^2-4i, 3i^2+2i\}$$
We claim for all $m=1,2,\dots,k-1$  that $$N_{0} [x_k]\cup N_1 [x_{k-1}]\cup \cdots \cup N_{m} [x_{k-m}]= [-1,3m^2 +2m-2]\cup\{3m^2+2m\}.$$

This will be proved via induction on $m$, with the  $m=1$ case verified below.

Since $N_0[x_k] = \{0\}$ and $N_1[x_{k-1}] = [1,3] \cup \{-1,5\}$, it follows that 
\begin{eqnarray*}
N_0 [x_k] \cup N_1 [x_{k-1}]&=& [-1,3]\cup \{5\}\\
&=&  [-1,3(1)^2 +2(1)-2]\cup\{3(1)^2+2(1)\}.
\end{eqnarray*}

Now assume $N_{0} [x_k]\cup N_1 [x_{k-1}]\cup \cdots \cup N_{m} [x_{k-m}]\supseteq [-1,3m^2 +2m-2]\cup\{3m^2+2m\}$ for some $m$ such that $1 \le m \le k-2$.  It follows that 
\begin{eqnarray*}
& & N_{0}[x_{k}]\cup N_{1}[x_{k-1}]\cup\cdots\cup N_{m}[x_{k-m}]\cup N_{m+1}[x_{k-(m+1)}]\\
 & = & [-1, 3m^2 +2m-2]\cup\{3m^2+2m\} \cup[3(m+1)^{2}-4(m+1)+2,  \, 3(m+1)^{2}+2(m+1)-2]
\\&& \cup \, \{3(m+1)^2-4(m+1), \, 3(m+1)^2+2(m+1)\}\\
 & = & [-1, 3m^2 +2m-2]\cup\{3m^2+2m\} \cup[3m^2 +2m+1,  \, 3(m+1)^{2}+2(m+1)-2]
\\&& \cup \, \{3m^2+2m-1, \, 3(m+1)^2+2(m+1)\}\\
 %& = & [-1, 3m^2 +2m-2]\cup\{3m^2+2m\}\cup \{3m^2+2m-1\}\cup\{3(m+1)^2+2(m+1)\}\\ &&
%\cup[3m^2+2m+1, 3(m+1)^{2}+2(m+1)-2] \\
 & = & [-1, 3(m+1)^{2}+2(m+1)-2] \cup \{3(m+1)^2+2(m+1)\}
\end{eqnarray*}

It follows by induction that
\begin{eqnarray*}
\bigcup_{i=0}^{k-1} N_i[x_{k-i}] &=  &[-1,3(k-1)^2 +2(k-1)-2]\cup\{3(k-1)^2+2(k-1)\} \\
&= &[-1, 3k^2-4k-1] \cup \{3k^2-4k+1\} \end{eqnarray*}

Since $k=\left \lfloor \frac{2+\sqrt{3n-2}}{3} \right \rfloor +1$, it follows that $k> \frac{2+\sqrt{3n-2}}{3}$. Therefore,
 \begin{eqnarray*}
 & &3k^2-4k-1\\
 &=&(3k-1)(k-1)-2\\
 &>&(\sqrt{3n-2}+1)\left(\frac{\sqrt{3n-2} -1 }{3}\right) -2\\
 &=&n-3
  \end{eqnarray*}
  
  Therefore, $3k^2-4k-1 \ge n-2$ and $[-1, 3k^2-4k-1]=  [-1,n-2] =[0, n-1]$.  It follows that $\bigcup_{i=0}^{k-1} N_i[x_{k-i}] = [0,n-1]$, and $b(C(n;1,3)) \le\left \lfloor \frac{2+\sqrt{3n-2}}{3} \right \rfloor+1$.  Hence,  $$b(C(n;1,3)) = \left \lfloor \frac{2+\sqrt{3n-2}}{3} \right \rfloor+1.$$  
\end{proof}

\section{Burning Numbers of Circulant Graphs of Higher Degree}

\begin{thm}\label{interval}
For any $m \ge 2$ and $n >2m$, $$b(C(n;1,2, \ldots , m)) = \left \lceil \frac{(m-1) + \sqrt{4mn + (m-1)^2}}{2m}\right \rceil.$$
\end{thm}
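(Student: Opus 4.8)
The plan is to follow the same two-sided strategy used for $C(n;1,2)$ and $C(n;1,3)$: a counting lower bound from the sizes of the closed neighbourhoods, matched by an explicitly constructed burning sequence whose neighbourhoods tile an arc of length at least $n$.

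First I would record the neighbourhoods. Since the distance set is $\{\pm 1, \pm 2, \ldots, \pm m\}$, a single step reaches every vertex within $m$ of a burned vertex, so $N_1[x] = [x-m, x+m]$; an easy induction then gives $N_\ell[x] = [x-\ell m, x+\ell m]$ for every $\ell \ge 0$, with no internal gaps. In particular $|N_\ell[x]| \le 2\ell m + 1$, with equality whenever $2\ell m + 1 \le n$. Unlike the $C(n;1,m)$ case, these neighbourhoods are solid intervals, so no refined bound (as in Corollary \ref{neighboursize2}) is needed and the counting estimate is exactly tight, which is why the answer comes out as a single clean closed form. For the lower bound, let $(x_1, \ldots, x_k)$ be an optimal burning sequence. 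Summing neighbourhood sizes gives $n \le \sum_{i=0}^{k-1} |N_i[x_{k-i}]| \le \sum_{i=0}^{k-1}(2im+1) = mk^2 - (m-1)k$. Rearranging $mk^2 - (m-1)k - n \ge 0$ and taking the larger root yields $k \ge \frac{(m-1) + \sqrt{4mn + (m-1)^2}}{2m}$, so $b(G)$ is at least the stated ceiling.

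For the upper bound, set $k = \left\lceil \frac{(m-1) + \sqrt{4mn+(m-1)^2}}{2m}\right\rceil$ and define the sequence by $x_{k-i} = mi^2 + i$ for $0 \le i \le k-1$; this recovers the $2i^2+i$ choice of Theorem \ref{circ12} when $m=2$. Then $N_i[x_{k-i}] = [mi^2 + i - im,\ mi^2 + i + im]$, and I would show by induction on $j$ that $\bigcup_{i=0}^{j} N_i[x_{k-i}] = [0,\, mj^2 + (m+1)j]$: the left endpoint of the $(j+1)$-st interval simplifies to $mj^2 + (m+1)j + 1$, so the intervals abut with no gap, and the new right endpoint simplifies to $m(j+1)^2 + (m+1)(j+1)$. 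Taking $j = k-1$, the union is $[0,\, m(k-1)^2 + (m+1)(k-1)] = [0,\, mk^2 - (m-1)k - 1]$. Because $k$ meets the lower-bound inequality, $mk^2 - (m-1)k \ge n$, so this arc has length at least $n$ and hence covers all of $\mathbb{Z}_n$, giving a burning sequence of the required length.

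The main obstacle is bookkeeping rather than conceptual: verifying the telescoping of the interval endpoints (and that consecutive neighbourhoods share no gap) requires careful algebra, and one must argue cleanly that a contiguous integer arc of length $\ge n$ covers $\mathbb{Z}_n$ even though an individual neighbourhood $N_i[x_{k-i}]$ may wrap around once $2im+1 > n$. The cleanest way to handle the wraparound is to carry out the covering induction in $\mathbb{Z}$ and then reduce modulo $n$, observing that any overlap created by wrapping only helps coverage.
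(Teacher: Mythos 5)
Your proposal is correct and follows essentially the same route as the paper: the same counting lower bound $n \le \sum_{i=0}^{k-1}(2im+1) = mk^2-(m-1)k$, and the identical burning sequence $x_{k-i}=mi^2+i$ whose neighbourhoods $[mi^2+(1-m)i,\ mi^2+(1+m)i]$ abut to cover an arc of length $mk^2-(m-1)k \ge n$. You have in fact supplied the telescoping details that the paper only sketches by reference to Theorem~\ref{circ12}.
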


\begin{proof}
Let $G = b(C(n;1,2, \ldots , m)) $.  We note that for any $x \in V(G)$ and $\ell \le 0$ , $N_{\ell}[x] = [x-\ell m, x+ \ell m]$.  Furthermore,  $ |N_\ell [x]| = 2\ell +1$ for any $\ell \le \frac{n-1}{2}$; otherwise, $ |N_\ell [x]| = n$.  Using an argument similar to that in the proof of Theorem \ref{circ12}, we have 
$b(C(n;1,2, \ldots , m)) \ge \left \lceil \frac{(m-1) + \sqrt{4mn + (m-1)^2}}{2m}\right \rceil.$

Next, we select the burning sequence $(x_1, x_2, \ldots , x_k)$ where $k =   \left \lceil \frac{(m-1) + \sqrt{4mn + (m-1)^2}}{2m}\right \rceil$ and $x_{k-i} = i^2 m +i$ for each $i=0, \ldots , k-1$.  Again, using a proof similar to that for Theorem \ref{circ12}, it can be shown that this sequence burns all the vertices of $G$.  The result follows.
\end{proof}

The {\it lexicographic} product (or {\it wreath} product) of graphs $G$ and $H$ is denoted $G \cdot H$,  where  $V(G\cdot H) = V(G) \times V(H)$, and  $E(G \cdot H ) = \{(x_1, y_1) (x_2, y_2)| x_1x_2 \in E(G) ,   \mbox{ or }  x_1= x_2 \mbox{ and }  y_1y_2 \in E(H)\}$.   In other words, $G \cdot H$ is obtained by (1) replacing each vertex of $G$ with a copy of $H$, and (2) adding all possible edges between two copies of $H$,  whenever they have replaced adjacent vertices of $G$. 
It is well known that the lexicographic product of two circulant graphs is itself a circulant graph.   Specifically, if $G = C(n_1;S)$ and $H = C(n_2, T)$, then $G \cdot  H \cong C(n_1n_2; n_1T \cup  \left( \bigcup_{ s \in S} (n_1 \mathbb Z_{n_2} + s) \right) $ \cite{Br}.  

Lower and upper bounds of the burning number of a lexicographic product $G \cdot H$ were given by Roshanbin \cite{BGM}, as stated the following theorem.

\begin{thm}\label{wreathburnBnds}\cite{BGM}
For any two graphs $G$ and $H$,  we have $$ b(G)\le b(G\cdot H)\le b(G)+2.$$ 
\end{thm}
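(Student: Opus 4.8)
The plan is to reduce the whole statement to one structural fact about distances in the lexicographic product and then read off both inequalities. Writing $N_r^G[\cdot]$ and $N_r[\cdot]$ for $r$-closed neighbourhoods in $G$ and in $G\cdot H$ respectively, I would first establish that for distinct first coordinates $x_1\neq x_2$ the distance $d_{G\cdot H}((x_1,y_1),(x_2,y_2))$ equals $d_G(x_1,x_2)$, since a shortest $G$-path lifts directly (adjacent copies of $H$ are completely joined, so the $H$-coordinates are irrelevant to inter-copy hops), while two vertices in a common copy satisfy $d_{G\cdot H}((x,y_1),(x,y_2))\le 2$ whenever $x$ is incident to an edge of $G$ (detour out to any neighbouring copy and back). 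From this I would extract the identity I actually need, namely $N_r[(x,y)] = N_r^G[x]\times V(H)$ for every $r\ge 2$. I will assume throughout that $G$ has at least one edge, since for $G=K_1$ one has $G\cdot H=H$ and the upper bound fails outright; this is the standard setting in which the burning number is studied.

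For the lower bound $b(G)\le b(G\cdot H)$, I would take an optimal burning sequence $((x_1,y_1),\dots,(x_k,y_k))$ of $G\cdot H$ with $k=b(G\cdot H)$ and project onto first coordinates. Given any $u\in V(G)$, fix an arbitrary $v\in V(H)$; the covering condition forces $(u,v)\in N_{k-i}[(x_i,y_i)]$ for some $i$, and the distance formula then gives $u\in N_{k-i}^G[x_i]$ (trivially when $u=x_i$, and via $d_G(x_i,u)\le k-i$ otherwise). Hence $\bigcup_{i=1}^{k}N_{k-i}^G[x_i]=V(G)$, so $(x_1,\dots,x_k)$ covers $G$; any repeated projection is harmless, as a later repeat carries a strictly smaller radius and is therefore redundant. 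This yields $b(G)\le k$.

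For the upper bound $b(G\cdot H)\le b(G)+2$, I would lift an optimal burning sequence $(x_1,\dots,x_k)$ of $G$, with $k=b(G)$, to the sequence $((x_1,y_0),\dots,(x_k,y_0))$ in $G\cdot H$ for a fixed $y_0\in V(H)$, padded by two further (redundant) vertices so that its length is $k+2$. The vertex $(x_i,y_0)$, burned at time $i$, then spreads for $(k+2)-i=(k-i)+2\ge 2$ steps, so the structural identity gives reach $N_{(k-i)+2}[(x_i,y_0)]=N_{(k-i)+2}^G[x_i]\times V(H)\supseteq N_{k-i}^G[x_i]\times V(H)$. Taking the union over $i$ produces $\left(\bigcup_{i=1}^{k}N_{k-i}^G[x_i]\right)\times V(H)=V(G)\times V(H)$, so the lifted sequence burns all of $G\cdot H$ in $k+2$ steps.

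The \emph{only} real content is the distance computation, and the main obstacle I anticipate is the within-copy bound: this is precisely where the hypothesis that $G$ has an edge must be invoked, and also where the additive constant $2$ is forced. A vertex whose $G$-radius is $0$ (the last-burned term $x_k$) must still sweep an entire copy of $H$ in the $H$-direction, and radius $2$ rather than $1$ is exactly what makes $N_2[(x,y)]$ absorb the whole copy $\{x\}\times V(H)$. Thus the careful handling of same-copy distances both supplies the mechanism of the proof and explains why this argument cannot push the constant below $2$.
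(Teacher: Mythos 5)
The paper does not actually prove this statement; it is imported verbatim from \cite{BGM} (Roshanbin's thesis) and used as a black box, so there is no in-paper argument to compare against. Your proof is correct and self-contained, and it follows what is essentially the standard route: the distance computation $d_{G\cdot H}((x_1,y_1),(x_2,y_2))=d_G(x_1,x_2)$ for $x_1\neq x_2$ together with the within-copy bound of $2$ gives $N_r[(x,y)]=N_r^G[x]\times V(H)$ for $r\ge 2$, from which the projection argument yields the lower bound and the padded lift yields the upper bound. Two points you handle that are worth keeping explicit: the degenerate case $G=K_1$ (where the upper bound genuinely fails, so the hypothesis that $G$ has an edge, or equivalently that $G$ is connected on at least two vertices, is needed), and the fact that the projected sequence may repeat vertices, which you correctly dispose of via the covering characterization of the burning number (a repeated centre at a later time has strictly smaller radius, so its ball is contained in the earlier one and can be replaced by any unburned vertex).
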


Therefore, from Theorems, \ref{half},  \ref{circ12}, \ref{circ13} , \ref{interval} and  \ref{wreathburnBnds}, we obtain the following.

\begin{cor}  For any graph $H$, 
\begin{enumerate}
\item $ \left \lceil\frac{1+\sqrt{2n+1}}{2} \right \rceil  \le b\left (C\left(n;1,\frac n2 \right)\cdot  H \right ) \le \left \lceil\frac{1+\sqrt{2n+1}}{2} \right \rceil  + 2. $
\item $\left \lceil\frac{1+\sqrt{1+8n}}{4} \right \rceil \le b(C(n;1,2)\cdot H ) \le \left \lceil\frac{1+\sqrt{1+8n}}{4} \right \rceil + 2. $
\item $ \left \lfloor \frac{2+\sqrt{3n-2}}{3} \right \rfloor +1 \le b( C(n;1,3)\cdot H ) \le \left \lfloor \frac{2+\sqrt{3n-2}}{3} \right \rfloor +3.$
\item $\left \lceil \frac{(m-1) + \sqrt{4mn + (m-1)^2}}{2m}\right \rceil \le  b(C(n;1,2, \ldots , m) \cdot H) \le  \left \lceil \frac{(m-1) + \sqrt{4mn + (m-1)^2}}{2m}\right \rceil +2$.
\end{enumerate}
\end{cor}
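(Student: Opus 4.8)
The plan is to derive each of the four inequalities by a single substitution into the general lexicographic-product bound of Theorem \ref{wreathburnBnds}. That theorem asserts that for \emph{any} two graphs $G$ and $H$ one has $b(G) \le b(G \cdot H) \le b(G) + 2$; crucially, no hypothesis on the second factor $H$ is required, so each part of the corollary will hold for arbitrary $H$ with no condition imposed on it. The only ingredient specific to the circulant setting is the exact value of $b(G)$ for the first factor, and each such value has already been computed in an earlier theorem. Thus the entire proof reduces to reading off $b(G)$ from the appropriate theorem and inserting it into both ends of the cited inequality.

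First I would take the first factor to be $C\left(n;1,\frac n2\right)$ and apply Theorem \ref{wreathburnBnds} with this $G$. Theorem \ref{half} gives $b(G) = \left\lceil \frac{1+\sqrt{2n+1}}{2}\right\rceil$, and substituting this value into $b(G) \le b(G\cdot H) \le b(G)+2$ yields part (1) verbatim, with the lower bound equal to $b(G)$ and the upper bound equal to $b(G)+2$. The same mechanism produces parts (2) and (4): I would substitute $b(C(n;1,2)) = \left\lceil\frac{1+\sqrt{1+8n}}{4}\right\rceil$ from Theorem \ref{circ12} and $b(C(n;1,2,\ldots,m)) = \left\lceil \frac{(m-1)+\sqrt{4mn+(m-1)^2}}{2m}\right\rceil$ from Theorem \ref{interval}, in each case reading the lower bound as $b(G)$ and the upper bound as $b(G)+2$.

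The one place that warrants a moment's care is part (3), where the upper bound carries a ``$+3$'' rather than a ``$+2$''. Here Theorem \ref{circ13} records the burning number in the form $b(C(n;1,3)) = \left\lfloor \frac{2+\sqrt{3n-2}}{3}\right\rfloor + 1$, so the corollary's lower bound is this quantity itself, while its upper bound is $b(C(n;1,3)) + 2 = \left\lfloor \frac{2+\sqrt{3n-2}}{3}\right\rfloor + 3$. Thus the discrepancy between part (3) and the other parts is only apparent: the ``$+3$'' is simply the ``$+1$'' already built into the exact formula of Theorem \ref{circ13} combined with the generic additive slack of $2$ from Theorem \ref{wreathburnBnds}, and no separate argument is needed.

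In truth there is essentially no obstacle to overcome: the corollary is a packaging of previously established facts, and the proof is a fourfold application of one cited inequality. The only bookkeeping I would be careful about is confirming that, in each part, the parameter $n$ lies in the range for which the corresponding exact formula was proved, namely $n \ge 4$ with $n$ even for (1), $n \ge 5$ for (2), $n \ge 7$ for (3), and $n > 2m$ for (4). These ranges are inherited directly from Theorems \ref{half}, \ref{circ12}, \ref{circ13}, and \ref{interval}, so the stated corollary holds on exactly those domains and for every graph $H$.
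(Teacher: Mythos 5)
Your proposal is correct and matches the paper's own (implicit) proof exactly: the paper derives this corollary by citing Theorems \ref{half}, \ref{circ12}, \ref{circ13}, \ref{interval} together with Theorem \ref{wreathburnBnds}, which is precisely your substitution of each exact burning number into $b(G)\le b(G\cdot H)\le b(G)+2$. Your observation that the ``$+3$'' in part (3) is just the ``$+1$'' from Theorem \ref{circ13} plus the generic slack of $2$ is the right reading, and your attention to the ranges of $n$ is a sensible extra check.
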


Finally from Theorem  \ref{order}, we have the following, more general result.

\begin{cor}
For any graph $H$,  $b(C(n;1,m) \cdot H)$ is on the order of $\sqrt\frac nm$.
\end{cor}

While these results hold for any graph $H$, when  $H = C(n'; T)$ for some $n' \ge 4$ and an appropriate choice of set $T$, they establish bounds on the burning numbers of  infinite families of circulant graphs.

\section{References}

\end{document}